\newtheorem{lemme}{Lemma}
\newtheorem{prop}{Proposition}
\newtheorem{theorem}{Theorem}
\newtheorem{corollaire}{Corollary}
\newtheorem{remark}{Remark}
\newcommand\E{ \mathbf{E} }
\newcommand\ii{ \mathbf{i} }
\newcommand\jj{ \mathbf{j} }
\newcommand\R{ \mathbf{R} }
\newcommand\PP{ \mathbf{P} }
\newcommand\Var{ \mathbf{Var} }
\newcommand\T{ \mathrm{T} }
\newcommand\G{ \mathrm{G} }
\newcommand\V{ \mathrm{V} }
\newcommand\EE{ \mathrm{E} }
\title{Spectra of Wishart Matrices \\ with size-dependent entries.}
 \author{Nathan Noiry\footnote{nathan.noiry@parisnanterre.fr}}
\date{}
\begin{document}
\maketitle

\begin{abstract}
We prove the convergence of the empirical spectral measure of Wishart matrices with size-dependent entries and characterize the limiting law by its moments. We apply our result to the cases where the entries are Bernoulli variables with parameter $c/n$ or truncated heavy-tailed random variables. In both cases, when $c$ goes to infinity or when the truncation is small, the limiting spectrum is a perturbation of the Marchenko-Pastur distribution and we compute its leading term.
\end{abstract}
\bigskip
\noindent {\bf MSC 2010 Classification:} 05C80; 60B20.\\
{\bf Keywords:} Wishart matrices; Marchenko-Pastur distribution; Erdös-Rényi bipartite random graphs; heavy tailed random variables.

\section{Introduction}
Let $X_n$ be a real random matrix of size $n \times m$ with i.i.d. entries. We define the Wishart matrix  $W_n = \frac{1}{n} X_nX_n^T$, where $X_n^T$ is the transpose of $X_n$. The spectral measure of $W_n$ is the random probability law:
\[  \mu_{W_n} = \frac{1}{n} \sum\limits_{ \lambda \in \text{Spec}(W_n)} \delta_{\lambda},   \]
where $\text{Spec}(W_n)$ is the spectrum of $W_n$ and $\delta_{ \lambda}$ the Dirac at $\lambda$. Since $W_n$ is a positive symmetric matrix, its eigenvalues are nonnegative reals. The work of Marchenko and Pastur \cite{marchenko1967distribution} implies that, when the entries have variance equal to $1$ and finite moments of all order. Then, almost surely, $\mu_{W_n}$ weakly converges to a probability law $\mu_{ \alpha}$ as $n,m \rightarrow + \infty$ and $m/n \rightarrow \alpha >0$. The law $\mu_{ \alpha}$ is given by: 
\[  \mu_{ \alpha}( \mathrm{d}x) =  \frac{\sqrt{(b-x)(x-a)}}{ 2 \pi x} \mathrm{d}x + \mathbf{1}_{ \alpha <1} \left(1- \alpha \right) \delta_0( \mathrm{d}x),  \]
where $a = ( 1 - \sqrt{\alpha})^2$ and $b = (1 + \sqrt{ \alpha } )^2$.

The main issue of this paper is to let the law of the entries of $X_n$ depend on $n$. Informally, our first result (Theorem \ref{Theorem A.S. Convergence}) states that in that case, under some moment conditions, the measures $\mu_{W_n}$ converge weakly to a probability law which is characterized by its moments, for which we provide a formula. This is an analog for Wishart matrices of a result obtained by Zakharevich for Wigner matrices in \cite{zakharevich2006generalization}. The method here is based on a proof of the convergence of all the moments of the spectral measures $\mu_{W_n}$. The $k$-th limiting moment will write:
\[    \sum\limits_{a=1}^{k} \sum\limits_{l=1}^{a} \alpha^l \sum\limits_{ \substack{ \mathbf{b}=(b_1, \ldots, b_a)   \\ \substack{  b_1 \geq b_2 \geq \ldots \geq b_a \geq 2 \\ b_1 + b_2 + \cdots + b_a = 2k  } } } | \mathcal{W}_k(a,a+1,l,\mathbf{b})| \prod\limits_{i=1}^{a} A_{b_i}.    \]
The set $\mathcal{W}_k(a,a+1,l,\mathbf{b})$ is a combinatorial object linked with closed words on planar rooted trees and encodes the combinatorics of moments. We give a precise definition in Section \ref{Section Zakharevich}. Interestingly, the $A_i$'s coefficients, given by the formula \eqref{Assumption Limiting Moment}, are the only reminders of the laws of the entries of the matrices. 

The convergence of the spectral measure was already proved in \cite{benaych2012marchenko} by Benaych-Georges and Cabanal-Duvillard, using different arguments. See also Male in \cite{male2017limiting} for related work. However, the main advantage of our approach is the explicit formula we obtain for the moments, which is more amenable to analysis, as we will see in Sections \ref{Section Zakharevich} and \ref{Section the Bernoulli case}. 

In \cite{vengerovsky2014eigenvalue}, Vengerovsky treated the particular case of diluted matrices $X_n(i,j) = a(i,j) d_n(i,j)$ where the $a(i,j)$'s are i.i.d. centered random variables and the $d_n(i,j)$'s are i.d.d. with Bernoulli law of parameter $c/n$. He derived a formula for the limiting moments, in terms of combinatorial quantities that admit a recursive formula. 

In the second part of the paper, we will focus on this particular case and let the entries of $X_n$ be i.i.d. Bernoulli laws with parameter $c/n$. In this setting, the Wishart matrices can be easily linked with the adjacency matrix of a bipartite random graph which admits a limit for the local weak topology. This convergence can be used to prove the convergence of the resolvent of the bipartite graph and therefore of $\mu_{W_n}$ itself, as explained in \cite{bordenave2010resolvent} by Bordenave and Lelarge. This is the content of Theorem \ref{Theorem Bernoulli case}. The limiting spectral measure $\mu_{ \alpha,c}$ depends only on $\alpha$ and $c$ and converges to the law $\mu_{\alpha}$ as $c \rightarrow \infty$. In Theorem \ref{Theorem Asympt Dvpt Bernoulli case}, we describe how $\mu_{ \alpha,c}$ differs from its limit $\mu_{ \alpha}$ by giving an asymptotic expansion in $1/c$ of its moments. More precisely, we will obtain that, in the sense of moments convergence:
\[  c \big( \mu_{ \alpha ,c} - \mu_{ \alpha } \big)  \underset{ c \rightarrow + \infty}{ \longrightarrow } \mu_{ \alpha}^{(1)},     \]
where $\mu_{ \alpha}^{(1)}$ is a signed measure of total mass zero, see Theorem \ref{Theorem Asympt Dvpt Bernoulli case}. The proof, based on a more careful analysis of the moment formula obtained in \ref{Theorem A.S. Convergence}, is inspired by the computations made in \cite{enriquez2015spectra} by Enriquez and Ménard. A natural extension would be to prove that the convergence holds in the sense of weak convergence, but it should involved new techniques since the moments of a signed measure of total mass zero do not characterized it. See Figure \ref{fig6} for numerical simulations.

\begin{figure}[h]
\hspace{-0.7cm}
\includegraphics[scale=0.4]{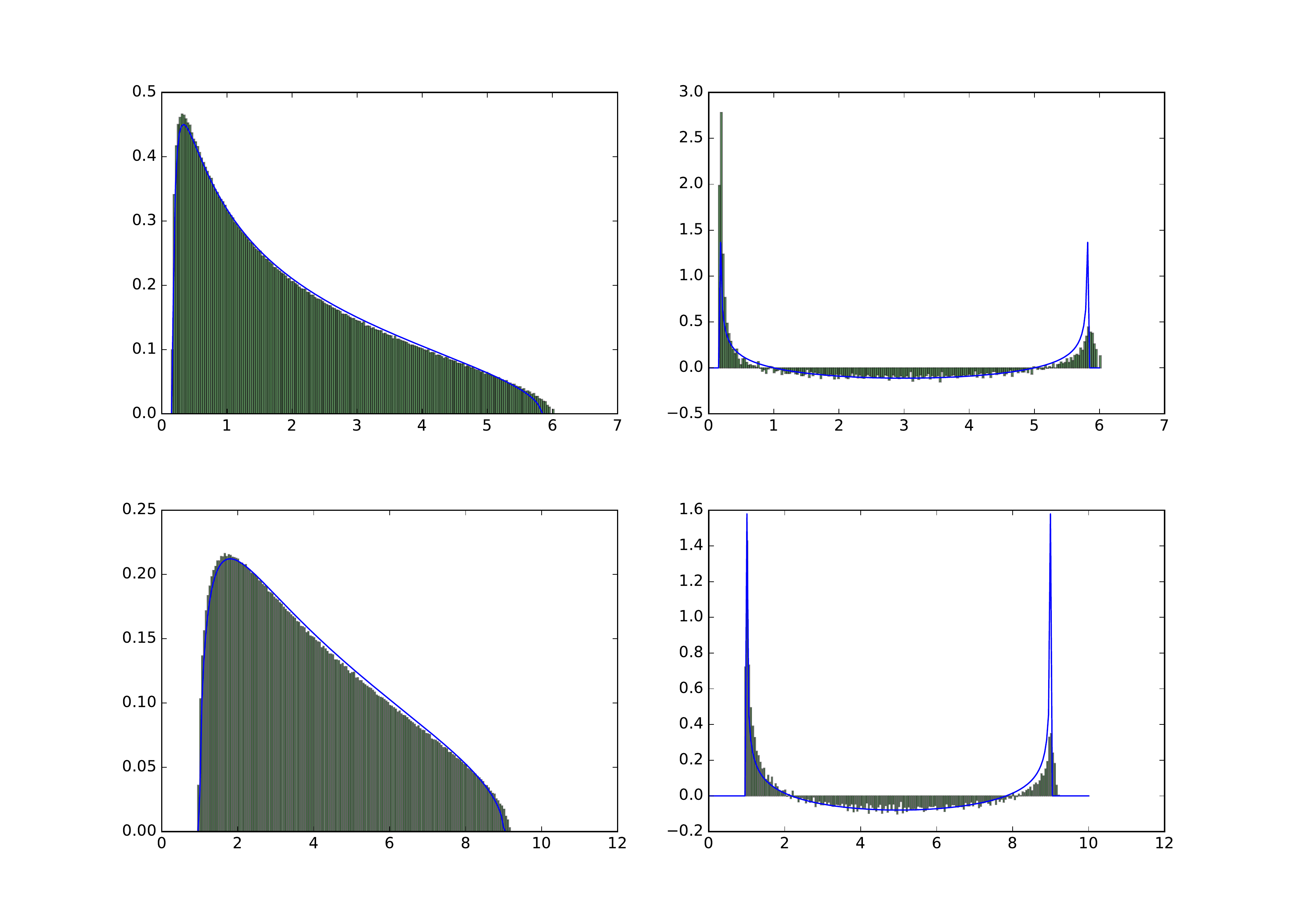}   
\captionof{figure}{Numerical simulations for the spectrum of $100$ Wishart matrices associated to random matrices of size $n \times \alpha n$ with i.i.d. entries with Bernoulli law of parameter $c/n$, with $c=20$ and $n=3000$. The theoritical densities of $\mu_{ \alpha }$ and $\mu_{\alpha}^{(1)}$ are drawn in blue. The top diagrams correspond to $\alpha =2$ whereas the bottom diagrams correspond to $\alpha=4$.}
\label{fig6}
\end{figure}
\bigskip
In the last part of this paper, we apply our results to heavy tailed random matrices. In that case, the entries of $X_n$ do not have finite moments of all order so that our main result does not apply. Instead, we truncate the entries at a constant $B>0$ times the largest $n$-th quantile of the corresponding law. By Theorem \ref{Theorem A.S. Convergence}, the spectral measures associated to the truncated random matrices converges to a deterministic probability law. The moments of this limiting law admit an asymptotic expansion involving the measures $\mu_{ \alpha}$ and $\mu_{ \alpha}^{(1)}$, as $B \rightarrow 0$. See Theorem \ref{Theorem Heavy Tailed}.

\section{A generalized Marchenko-Pastur theorem}\label{Section Zakharevich}
Let $ \mathcal{P} = \{P_n \}_{ n \geq 1}$ be a family of probability laws on $\R$ which have zero mean. For all $n \geq 1$, let $X_n = (X_n(i,j))_{1 \leq i,j \leq n}$ be a random $n \times m$ matrix with i.i.d. entries with law $P_n$. We will make the hypothesis that the ratio $m/n$ converges to a real $\alpha >0$ and that for all $k \geq 1$, the following limit exists and is finite:
\begin{equation}
A_k := \lim\limits_{n \rightarrow + \infty} \frac{M_k(P_n)}{n^{k/2-1} M_2(P_n)^{k/2}},
\label{Assumption Limiting Moment}
\end{equation}
where $M_k(P_n)$ is the $k$-th moment of $P_n$. Denote by $\mathcal{A}$ the sequence formed by the $A_k$'s. We are interested in the behavior of the spectral measures of the sequence of random matrices
\begin{equation*}
W_n := \frac{1}{n M_2(P_n)} X_n X_n^T.
\end{equation*} 

In order to properly state our first result, we need to introduce the notion of word on a labeled graph. A labeled graph is a graph $\G = (\V, \mathrm{E})$ together with a labeling of the vertices, that is a one-to-one application from $\V$ to $\{1, \ldots, |\V| \}$. A relabeling of a labeled graph is a new choice of bijection between $\V$ and $\{1, \ldots, |\V|\}$. Note that there are $|\V|!$ choices of labelings for a given graph $\G = (\V, \mathrm{E})$. A word of length $k \geq 1$ on a labeled graph $\G$ is a sequence of labels $i_1, i_2, \ldots, i_k$ such that $\{i_j, i_{j+1}\}$ is a pair of adjacent labels (that is the associated vertices are neighbours in $\G$) for all $1 \leq j \leq k-1$. A word of length $k$ is said to be closed if $i_1 = i_k$. Let $\ii = i_1, \ldots, i_k$ and $\ii' = i_1', \ldots, i_k'$ be two words of length $k$ on two labeled graphs $\G$ and $\G'$ having the same number of vertices. Then, $\ii$ and $\ii'$ are said to be equivalent if there exists a bijection $\sigma$ of $\{1, \ldots, |\V|\}$ such that $\sigma(i_j) = i_j'$ for all $1 \leq j \leq k$. In words, $\ii$ and $\ii'$ are equivalents if there exists a relabeling of a $\G$ such that the word associated to $\ii$ is exactly $\ii'$. One can check that this defines an equivalence relation on words on labeled graphs.

Recall that a planar rooted tree is a connected graph without loop embedded in the plane, with a distinguished vertex called the root. A vertex at odd (resp. even) distance from the root will be called an odd (resp. even) vertex. An edge with an odd (resp. even) origin vertex will be called an odd (resp. even) edge. 

\begin{theorem}\label{Theorem A.S. Convergence}
Suppose that for some $\gamma>0$, $A_k = O( \gamma^k )$ as $k\rightarrow +\infty$. Then there exists a probability law $\mu_{ \mathcal{A}, \alpha}$ depending only on $\mathcal{A}$ and $\alpha$, such that $\mu_{W_n}$ converges weakly to $\mu_{ \mathcal{A}, \alpha}$ in probability: for all $\varepsilon >0$ and all bounded continuous function $f: \R \rightarrow \R$,
\begin{equation*}
\PP \left( \left| \int_{\R} f \mathrm{d}\mu_{W_n} - \int_{\R} f \mathrm{d}\mu_{ \mathcal{A}, \alpha} \right| > \varepsilon \right)  \underset{ n \rightarrow +\infty }{ \longrightarrow } 0.
\end{equation*}
Moreover, the measure $\mu_{\mathcal{A}, \alpha}$ is characterized by its sequence of moments:
\begin{equation}
 M_k( \mu_{ \mathcal{A},\alpha}) =  \sum\limits_{a=1}^{k} \sum\limits_{l=1}^{a} \alpha^l \sum\limits_{ \substack{ \mathbf{b}=(b_1, \ldots, b_a)   \\ \substack{  b_1 \geq b_2 \geq \ldots \geq b_a \geq 2 \\ b_1 + b_2 + \cdots + b_a = 2k  } } } | \mathcal{W}_k(a,a+1,l,\mathbf{b})| \prod\limits_{i=1}^{a} A_{b_i},   
\label{Moments formula in the Theorem}
\end{equation}
where $\mathcal{W}_k(a,a+1,l,\mathbf{b})$ is a set of representatives of the equivalence classes of closed words on labeled rooted planar trees having ``$a$" edges, of which $l$ are odd edges, starting from the root and such that for all $1 \leq i \leq a$, one edge is browsed $b_i$ times.
\end{theorem}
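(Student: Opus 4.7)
I use the method of moments. The plan is (i) to compute $\E[M_k(\mu_{W_n})]=\frac{1}{n}\E[\mathrm{Tr}(W_n^k)]$ and show it converges to the right-hand side of \eqref{Moments formula in the Theorem}; (ii) to show that $\Var[M_k(\mu_{W_n})]\to 0$, so that moment convergence holds in probability; (iii) to use the growth bound on the $A_k$'s to verify Carleman's criterion, ensuring $\mu_{\mathcal{A},\alpha}$ is uniquely determined by its moments, from which weak convergence in probability follows.

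\textbf{Expanding the trace.} Expanding the diagonal entries of $W_n^k$ gives
\begin{equation*}
\E[M_k(\mu_{W_n})]=\frac{1}{n(nM_2(P_n))^k}\sum_{\mathbf{i}\in[n]^k}\sum_{\mathbf{j}\in[m]^k}\E\bigl[X_n(i_0,j_1)X_n(i_1,j_1)X_n(i_1,j_2)\cdots X_n(i_0,j_k)\bigr],
\end{equation*}
so each term is indexed by a closed bipartite walk of length $2k$ in the complete bipartite graph on $[n]\cup[m]$. I group the walks into equivalence classes according to the bipartite multigraph they visit and the order of traversal; by centering, only classes in which every distinct edge is used at least twice can contribute. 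For a class with $a$ distinct edges of multiplicities $\mathbf{b}=(b_1,\ldots,b_a)$, $b_i\geq 2$, $\sum b_i = 2k$, visiting $p$ distinct row-labels and $q$ distinct column-labels, the number of representatives is $\sim n^p m^q$, the joint moment factorises as $\prod_{i=1}^a M_{b_i}(P_n)$, and combining with the normalisation and \eqref{Assumption Limiting Moment}, together with $m/n\to\alpha$, the contribution is
\begin{equation*}
n^{p+q-a-1}\,\alpha^q\,\prod_{i=1}^a A_{b_i}+o(1).
\end{equation*}
Connectedness of the visited graph forces $p+q\leq a+1$, with equality iff it is a tree, so only tree shapes contribute in the limit. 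A closed walk on a bipartite tree rooted at a row-vertex naturally encodes a closed word on a planar rooted tree (the traversal order imposes a planar embedding and multiplicity $b_i$ on each edge), and the parity convention of the statement makes $q$ equal to the number $l$ of odd edges. Summing over equivalence classes yields exactly \eqref{Moments formula in the Theorem}.

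\textbf{Concentration, Carleman, and the main obstacle.} For convergence in probability I compute $\Var[n^{-1}\mathrm{Tr}(W_n^k)]$ via the analogous expansion for pairs of walks: the dominant contributions from disjoint walk-pairs cancel with $\E[n^{-1}\mathrm{Tr}(W_n^k)]^2$, while pair-walks sharing at least one vertex lose a factor of $n$ in the count, so $\Var[M_k(\mu_{W_n})]=O(1/n)$ and moment convergence holds in probability by Markov's inequality. A crude bound $|\mathcal{W}_k(a,a+1,l,\mathbf{b})|\leq C^k$ (planar rooted trees on $\leq k+1$ vertices are of Catalan type) together with the hypothesis $A_k=O(\gamma^k)$ gives $M_k(\mu_{\mathcal{A},\alpha})\leq {C'}^k$, so Carleman's condition is met and the moment sequence characterises a unique probability measure. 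The main obstacle is the combinatorial identification of leading-order equivalence classes with elements of $\mathcal{W}_k(a,a+1,l,\mathbf{b})$: one has to track simultaneously the edge multiplicities $b_i$, the parity of edges, the vertex-relabeling freedom, and the planar embedding induced by the walk. A secondary subtlety is balancing the two $n$-dependent asymptotics — the combinatorial index counts and the $n$-dependence of $M_k(P_n)/(n^{k/2-1}M_2(P_n)^{k/2})$ — and this balance is precisely what assumption \eqref{Assumption Limiting Moment} is designed to enable.
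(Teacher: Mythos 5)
Your overall strategy coincides with the paper's: trace expansion, reduction to tree-shaped walks by comparing $n^{p+q-a-1}$ against the connectedness constraint $p+q\leq a+1$, a variance bound of order $1/n$ from pairs of walks, and moment determinacy. Steps (i) and (ii) are essentially right, up to one imprecision in (ii): the covariance of the two monomials vanishes exactly when the walks share no \emph{edge} (entry of the matrix), not no vertex; since edge-sharing pairs form a subset of vertex-sharing pairs, your counting bound still goes through, but the dichotomy should be stated in terms of edges.

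The genuine gap is in step (iii). The bound $|\mathcal{W}_k(a,a+1,l,\mathbf{b})|\leq C^k$ is false: an element of $\mathcal{W}_k(a,a+1,l,\mathbf{b})$ is an equivalence class of closed \emph{words}, not of trees, and as soon as some multiplicities $b_i$ exceed $2$, a single planar rooted tree supports many inequivalent words, one for each admissible interleaving of the $b_i$ traversals of the different edges. A star with roughly $\sqrt{k}$ edges, each browsed $2\sqrt{k}$ times, already carries on the order of $k^{k/2}$ closed words, and quotienting by relabelings does not bring this down to exponential. Accordingly, the conclusion $M_k(\mu_{\mathcal{A},\alpha})\leq C'^k$ cannot hold in general: it would force $\mu_{\mathcal{A},\alpha}$ to have compact support, whereas in the Bernoulli case ($A_k=c^{1-k/2}$, which satisfies the hypothesis $A_k=O(\gamma^k)$) the limit $\mu_{\alpha,c}$ has atoms dense in $\mathbf{R}_+$. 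The paper handles this by bounding the number of words living on a fixed tree by $\prod_{i}\binom{2k}{b_i}\leq(2k)^{2k}$, which yields only $M_k=O(k^{ck})$; determinacy must then be argued at this factorial-type growth rate, e.g.\ via the Stieltjes form of Carleman's criterion for measures supported on $[0,+\infty)$, using that $M_k^{1/(2k)}=O(k)$ so that $\sum_k M_k^{-1/(2k)}$ still diverges. Your argument needs to be repaired along these lines; as written, the determinacy step rests on a false combinatorial estimate.
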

\begin{remark}
The theorem can be thought as a universality result. Namely, if two sequences of probability law $P_n$ and $P_n'$ have the same asymptotic $\mathcal{A}$, the limiting spectral measures of $W_n$ and $W_n'$ are the same (in probability).
\end{remark}
\begin{corollaire}
If for all $k>2$, $A_k = 0$, the measures $\mu_{ \mathcal{A}, \alpha}$ and $\mu_{ \alpha }$ coincide. For example, this is the case when the laws $P_n$ are all equal.
\end{corollaire}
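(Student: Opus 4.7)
The plan is to apply the hypothesis $A_k = 0$ for $k > 2$ directly to the moment formula \eqref{Moments formula in the Theorem} and recognize the result as the known moment sequence of the Marchenko--Pastur law $\mu_\alpha$. First I read off from \eqref{Assumption Limiting Moment} that $A_2 = 1$. Since every $b_i$ in the inner sum of \eqref{Moments formula in the Theorem} satisfies $b_i \geq 2$ and $b_1 + \cdots + b_a = 2k$, and since $A_{b_i} = 0$ whenever $b_i > 2$, the only terms that survive are those with $a = k$ and $\mathbf{b} = (2, \ldots, 2)$, for which $\prod_{i=1}^a A_{b_i} = 1$. The formula collapses to
\[
M_k(\mu_{\mathcal{A}, \alpha}) = \sum_{l=1}^{k} \alpha^l \, |\mathcal{W}_k(k, k+1, l, (2, \ldots, 2))|.
\]

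Next I would identify the cardinality on the right combinatorially. A closed word of length $2k$ on a labeled rooted planar tree with $k$ edges that crosses each edge exactly twice is forced, starting from the root, to be the canonical contour walk of the underlying tree; quotienting by relabelings, the equivalence classes are therefore in bijection with rooted planar trees having $k$ edges, of which $l$ are odd. The standard Dyck-path encoding of planar trees (or equivalently the bijection with bipartite non-crossing partitions) identifies this count with the Narayana number $N(k, l) = \tfrac{1}{k}\binom{k}{l}\binom{k}{l-1}$, the reading of the parity statistic being fixed by a check on low-order cases ($k = 1, 2, 3$).

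Substituting, we obtain $M_k(\mu_{\mathcal{A}, \alpha}) = \sum_{l=1}^{k} N(k, l) \alpha^l$, which is the classical formula for the $k$-th moment of $\mu_\alpha$. Because $\mu_\alpha$ has compact support it is determined by its moments, so $\mu_{\mathcal{A}, \alpha} = \mu_\alpha$. For the ``example'' clause, if all $P_n$ coincide with a single law $P$ having moments of every order, then $M_k(P)/M_2(P)^{k/2}$ does not depend on $n$ and the factor $n^{1 - k/2}$ in \eqref{Assumption Limiting Moment} forces $A_k = 0$ for every $k > 2$. The one delicate step in the plan is the combinatorial identification with Narayana numbers; once the parity statistic built into $\mathcal{W}_k$ is matched with the edge-statistic on planar trees that generates Narayana numbers, everything else is bookkeeping.
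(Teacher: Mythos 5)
Your reduction of \eqref{Moments formula in the Theorem} is exactly right and is the same first step the paper intends: since $A_2=1$ by \eqref{Assumption Limiting Moment} and $A_{b_i}=0$ whenever $b_i>2$, the constraints $b_i\geq 2$ and $b_1+\cdots+b_a=2k$ leave only $a=k$ and $\mathbf{b}=(2,\ldots,2)$, and the surviving equivalence classes of closed words are in bijection with rooted planar trees having $k$ edges, graded by the number $l$ of odd edges. This gives $M_k(\mu_{\mathcal{A},\alpha})=\sum_{\T\in\mathcal{T}_k}\alpha^{l(\T)}$, which is precisely the quantity $a_k$ that reappears in Section \ref{Section the Bernoulli case}. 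Where you genuinely diverge from the paper is in identifying $a_k$ with $M_k(\mu_\alpha)$: you invoke the closed form $|\{\T\in\mathcal{T}_k:\ l(\T)=l\}|=N(k,l)$ together with the classical Narayana expression for the Marchenko--Pastur moments, whereas the paper decomposes a tree at the first child of the root, obtains the coupled recursion $a_{k+1}=\alpha\sum_{p+q=k}a_pb_q$, $b_{k+1}=\sum_{p+q=k}a_pb_q$, hence $A=1+\alpha zAB$, $B=1+zAB$, and recognizes the resulting quadratic as the equation \eqref{Equation on the Stieltjes transform} satisfied by the Stieltjes transform \eqref{Stieltjes formula S} of $\mu_\alpha$. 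The generating-function route never needs an explicit formula for the refined count, which is exactly why the paper can afford not to prove one.

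The weak point of your write-up is that the pivotal combinatorial identity is asserted rather than proved: verifying $k=1,2,3$ does not establish that the statistic ``number of odd edges'' is Narayana-distributed on planar rooted trees with $k$ edges for all $k$. The fact is true --- odd-level vertices are equidistributed with, for instance, leaves, or with peaks of the associated Dyck path --- but it is a nontrivial equidistribution result and requires an argument: either an explicit bijection, or precisely the first-child decomposition and generating-function computation that the paper carries out, from which $zA^2+(\alpha z-z-1)A+1=0$ and hence $a_k=\sum_{l}N(k,l)\alpha^l$ follow. As written, your proof has a genuine (if entirely repairable) hole at this step. The remaining points --- that $\mu_\alpha$ is compactly supported and therefore determined by its moments, and that constant laws $P_n=P$ with all moments finite give $A_k=\lim_n n^{1-k/2}M_k(P)/M_2(P)^{k/2}=0$ for $k>2$ --- are correct.
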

As the statement suggests, we are going to prove the result by the method of moments. Classically, we start with a computation of the average moments of $\mu_{W_n}$. For $k \geq 1$, we can write:
\begin{equation}
\E M_k(\mu_{W_n}) = \frac{1}{n^{k+1}M_2(P_n)^k} \sum\limits_{ \substack{1\leq i_1, \ldots, i_k \leq n \\ 1 \leq j_1, \ldots j_k \leq m} } \E[ X(i_1,j_1) X(i_2,j_1) \cdots X(i_k,j_k)X(i_1,j_k) ].
\label{Equation Average Moment}
\end{equation}
Denote $(\ii,\jj)$ the generic word $i_1j_1i_2 \ldots i_1j_k$ appearing in \eqref{Equation Average Moment}. We define the bipartite graph $\G=(\V,\EE)$ associated to the word $(\ii,\jj)$ by:
\[ \V = \{ (i_r, \ii), (j_r, \jj); \, 1 \leq r \leq k \} \quad \text{and} \quad \EE = \big\{ \{(i_r, \ii), (j_r, \jj)\}, \{ (i_{r+1}, \ii), (j_r, \jj) \}; \, 1 \leq r \leq k \big\},
\]
where we used the convention $k+1=1$. The abstract symbols $\ii$ and $\jj$ are needed to obtain a bipartite graph since the $i_r$'s and $j_r$'s can have common values (see Figure \ref{fig4} for illustration). We will refer to $\ii$ and $\jj$ letters. In words, the vertices of $\G$ are the letters of the word $( \ii, \jj)$ and two vertices are linked by an edge when they are consecutive in $(\ii,\jj)$. Denote by $s$ the number of vertices, $a$ the number of edges, $l$ the number of $\jj$-vertices and $\overline{l}$ the number of $\ii$-vertices in the word. Since $\G$ is connected, $s \leq a+1$. Moreover, since $P_n$ has zero mean, each edge must appear at least twice in the word to give a non-zero contribution in \eqref{Equation Average Moment}. As a consequence we obtain the bound $a \leq k$ because $i_1j_1 \ldots j_k$ possesses $2k$ edges counted with multiplicity. 

\begin{center}
\includegraphics[scale=0.65]{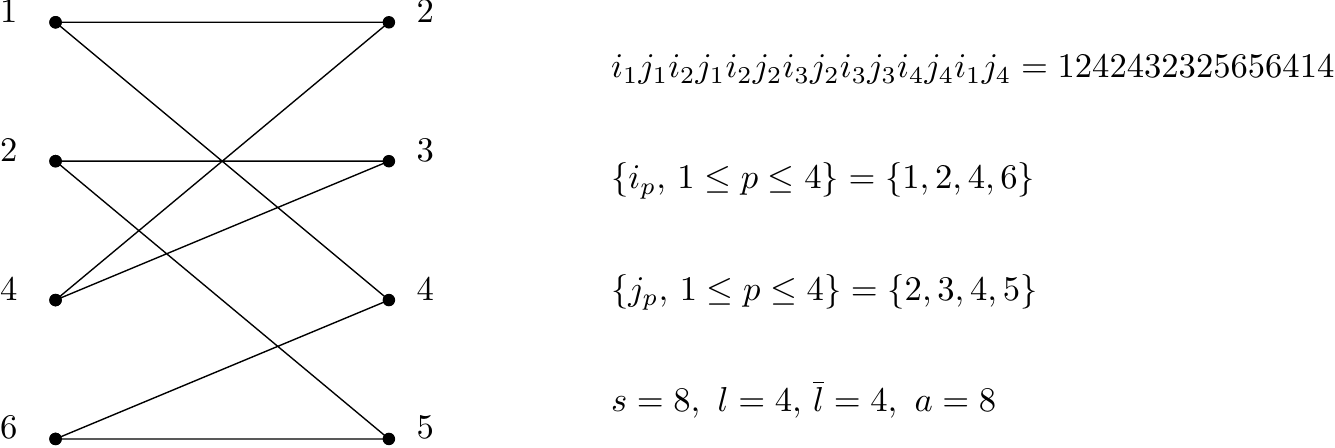} 
\captionof{figure}{Example of a word $(\ii,\jj)$ with its associated graph and quantities.}
\label{fig4}
\end{center}

Two words $(\ii,\jj)$ and $(\ii', \jj')$ are said equivalent if one can find a permutation $\sigma$ of $\{1, \ldots,n \}$ and another one $\tau$ of $\{1, \ldots, m \}$ such that
\[  \forall p \in \{1, \ldots k\}, \quad \sigma(i_p) = i_p' \, \, \, \text{and} \, \, \, \tau(j_p) = j_p'.  \]
One can check that this is an equivalence relation on the words appearing in \eqref{Equation Average Moment}. Note that $(\ii,\jj)$ has 
\[  C(s,l) = n(n-1) \cdots (n-l+s+1) \times m(m-1) \cdots (m-l+1) \sim \alpha^l n^s\]
equivalents. 
Fix $a \in \{1, \ldots, k\}$, $1 \leq s \leq a+1$ and $1 \leq l \leq a$. Let $\mathcal{B}_{a,k}$ be the set of $a$-tuples $\mathbf{b}=(b_1, \ldots, b_a)$ of integers such that
\begin{enumerate}
\item $b_1 \geq b_2 \geq \cdots \geq b_a \geq 2$;
\item $b_1+ \cdots +b_a = 2k$.
\end{enumerate}
For all $k \geq 1$ and $\mathbf{b} \in \mathcal{B}_{a,k}$, we introduce $\mathcal{W}_k(a,s,l,\mathbf{b})$ a set of representatives of the equivalence classes of words $(\ii,\jj)$ such that the associated graph has $a$ edges, $s$ vertices of which $l$ are $\jj$-vertices and such that for all $1 \leq i \leq a$ there is an edge in $\EE$ which has multiplicity $b_i$ in $(\ii,\jj)$. We can rewrite \eqref{Equation Average Moment} as:
\begin{equation}
\sum\limits_{a=1}^{k} \sum\limits_{s=1}^{a+1} \sum \limits_{l=1}^{s} \frac{C(s,l)}{n^{a+1}} \sum\limits_{b \in \mathcal{B}_{a,k}} \, \, \sum\limits_{ (\ii,\jj) \in \mathcal{W}(a,s,l,b)} \, \, \prod\limits_{1 \leq i \leq a} \frac{M_{b_i}(P_n)}{n^{b_i/2-1} M_2(P_n)^{b_i/2}}.
\label{Average Moment and Graphs} 
\end{equation}
From this equation we easily deduce the form of the limiting moments:
\begin{lemme}
An asymptotic contribution arises only if $s=a+1$ that is when the graph associated to $(\ii,\jj)$ is a tree. More precisely the limit of \eqref{Average Moment and Graphs} when $n \rightarrow +\infty$ is
\begin{equation}
M_k := \lim\limits_{n \rightarrow +\infty} \E M_k( \mu_{W_n} )= \sum\limits_{a=1}^{k} \sum\limits_{l=1}^{a} \alpha^l \sum\limits_{b \in \mathcal{B}_{a,k}} | \mathcal{W}_k(a,a+1,l,b)| \prod\limits_{i=1}^{a} A_{b_i}.
\label{What is M_k equal to}
\end{equation}
\end{lemme}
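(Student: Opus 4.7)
The plan is to pass to the limit termwise in the finite expansion \eqref{Average Moment and Graphs}. For fixed $k$ and a tuple $(a,s,l,\mathbf{b})$, three pieces of the summand depend on $n$: the combinatorial prefactor $C(s,l)/n^{a+1}$, the product of normalized moments $\prod_{i=1}^{a} M_{b_i}(P_n)/(n^{b_i/2-1}M_2(P_n)^{b_i/2})$, and the cardinality $|\mathcal{W}_k(a,s,l,\mathbf{b})|$. The text already records $C(s,l)\sim \alpha^l n^s$ (using $m/n\to\alpha$), so $C(s,l)/n^{a+1}\sim \alpha^l n^{s-a-1}$. Hypothesis \eqref{Assumption Limiting Moment} guarantees that each moment ratio converges to $A_{b_i}$, hence the product tends to $\prod_{i=1}^{a} A_{b_i}$. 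The cardinality $|\mathcal{W}_k(a,s,l,\mathbf{b})|$ is a purely combinatorial quantity, independent of $n$, since the equivalence classes are defined in terms of the shape of the word, not of the alphabet size.

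Next I would use the connectedness of $\G$, established in the discussion preceding \eqref{Average Moment and Graphs}, to conclude $s \leq a+1$. Consequently $n^{s-a-1}\leq 1$ and the $(a,s,l,\mathbf{b})$-term vanishes as $n\to\infty$ unless $s=a+1$. In the saturating case, $\G$ has $a+1$ vertices and $a$ edges and, being connected, must be a tree, which proves the first half of the statement. The asymptotic contribution of such a term is exactly $\alpha^l\,|\mathcal{W}_k(a,a+1,l,\mathbf{b})|\prod_{i=1}^{a} A_{b_i}$.

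Finally, every word $(\ii,\jj)$ contains the letter $i_1$, so $\G$ has at least one $\ii$-vertex and $l\leq s-1$; when $s=a+1$ this forces $l\leq a$, and $\mathcal{W}_k(a,a+1,a+1,\mathbf{b})=\emptyset$. The outer sums over $a$, $l$ and $\mathbf{b}$ being finite for fixed $k$, the exchange of limit and sum is immediate and produces \eqref{What is M_k equal to}. The only mildly delicate points are checking the asymptotic $C(s,l)\sim \alpha^l n^s$ and correctly identifying the constraint $l\leq a$ in the tree regime; neither is a genuine obstacle, and the lemma then follows essentially by inspection of \eqref{Average Moment and Graphs}.
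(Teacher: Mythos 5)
Your argument is correct and follows essentially the same route as the paper: the estimate $C(s,l)n^{-a-1}\sim\alpha^l n^{s-a-1}$ kills all terms with $s<a+1$, connectedness identifies the surviving graphs as trees, and hypothesis \eqref{Assumption Limiting Moment} supplies the limit of the moment ratios. Your additional observations (finiteness of the sum justifying the termwise limit, and the constraint $l\leq a$ coming from the presence of the $\ii$-vertex $i_1$) are details the paper leaves implicit, but they do not change the substance.
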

\begin{proof}[Proof]
Since $C(s,l)n^{-a-1} \sim \alpha^l n^{s-a-1}$ when $n \rightarrow +\infty$ we deduce that when $s<a+1$ the asymptotic contribution is zero. Hence a possible non-zero contribution arises only when $s=a+1$. The formula is a consequence of \eqref{Assumption Limiting Moment}.
\end{proof}
\begin{remark}\label{Remark on the parity}
The only non-zero contributions arise when $(\ii,\jj)$ is a walk on a tree that browses every edge and starts and finishes at the same vertex. Therefore each edge must be visited an even number of time: each $b_i$ in the tuple $\mathbf{b}$ is even. 
\end{remark}
\begin{remark}
The set $\mathcal{W}_k(a,a+1,l,\mathbf{b})$ is also the set of closed words on rooted planar trees having $a$ edges out of which $l$ are odd edges, starting from the root and such that for all $1 \leq i \leq a$, one edge is browsed $b_i$ times. Notice that the number of $\jj$-vertices is equal to the number of vertices in odd generations.
\end{remark}
In view of Theorem \ref{Theorem A.S. Convergence} we have to prove that $M_k( \mu_{W_n})$ concentrates around its mean. Since we are looking for a convergence in probability, it is sufficient to show that its variance vanishes when $n$ tends to infinity.
\begin{lemme}\label{Lemme Variance Sommable}
For all $k\geq 1$, $\Var( M_k(P_n) ) = O(n^{-1})$. In particular $M_k(P_n)$ converges to $M_k$ in probability. 
\end{lemme}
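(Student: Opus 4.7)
The plan is to apply the method of moments at the level of the variance. I expand
\begin{equation*}
\Var(M_k(\mu_{W_n})) = \frac{1}{n^{2k+2} M_2(P_n)^{2k}} \sum_{(\ii,\jj), (\ii',\jj')} \Big( \E[\Pi \Pi'] - \E[\Pi] \E[\Pi'] \Big),
\end{equation*}
where $\Pi$ and $\Pi'$ are the products of matrix entries associated to the closed words $(\ii,\jj)$ and $(\ii',\jj')$, in the notation of \eqref{Equation Average Moment}. By independence of distinct entries of $X_n$, the summand vanishes whenever the bipartite graphs $\G$ and $\G'$ associated to the two words are edge-disjoint. Hence only pairs whose graphs share at least one edge contribute, and for such pairs the union $\G \cup \G'$ is necessarily connected.

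For such an edge-sharing pair, let $a$ denote the number of distinct edges of $\G \cup \G'$, $b_1, \ldots, b_a$ their multiplicities in the combined walk, and $s$ the total number of vertices of $\G \cup \G'$. Each $b_i$ must be at least $2$ for a nonzero contribution, so $b_1 + \cdots + b_a = 4k$. By the same counting as in \eqref{Average Moment and Graphs}, each equivalence class of such pairs contains $O(n^s)$ representatives, each contributing
\begin{equation*}
\frac{1}{n^{a+2}} \prod_{i=1}^{a} \frac{M_{b_i}(P_n)}{n^{b_i/2 - 1} M_2(P_n)^{b_i/2}}
\end{equation*}
to $\E[\Pi \Pi']$, with $\E[\Pi]\E[\Pi']$ bounded by a similar expression. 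By assumption \eqref{Assumption Limiting Moment} the products of rescaled moments stay bounded as $n \to \infty$, and the connectedness of $\G \cup \G'$ forces $s \leq a + 1$; hence each equivalence class contributes $O(n^{s - a - 2}) = O(n^{-1})$.

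Since the number of equivalence classes is controlled by $k$ alone (there are finitely many bipartite graphs with at most $2k$ edges up to isomorphism, and finitely many closed walks on each), summing over them preserves the $O(n^{-1})$ bound, which yields $\Var(M_k(\mu_{W_n})) = O(n^{-1})$. The convergence $M_k(\mu_{W_n}) \to M_k$ in probability then follows by Chebyshev's inequality combined with the mean convergence $\E M_k(\mu_{W_n}) \to M_k$ established in the preceding lemma.

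The main technical point is the vertex-counting argument for edge-sharing pairs: one must check that the connectedness of $\G \cup \G'$ truly forces $s \leq a + 1$ regardless of how the two individual walks interact, so that the extra factor $n^{-1}$ is obtained uniformly over all topological types of overlap. Everything else is a direct rerun of the single-walk moment computation already carried out.
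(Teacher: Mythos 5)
Your proposal is correct and follows essentially the same route as the paper: expand the variance over pairs of closed words, discard edge-disjoint pairs by independence, use connectedness of $\G \cup \G'$ to get $s \leq a+1$, and conclude that each equivalence class contributes $O(n^{s-a-2}) = O(n^{-1})$ with the rescaled moments bounded by assumption \eqref{Assumption Limiting Moment}. No substantive differences to report.
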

\begin{proof}[Proof]
Let $k \geq 1$. We can write $\E[  M_k(P_n)^2 ] - \E[  M_k(P_n)]^2$ as
\begin{equation}
\frac{1}{n^{2(k+1)}M_2(P_n)^{2k}} \sum\limits_{ (\ii,\jj),(\ii',\jj') }  \Big( \E[P(\ii,\jj)P(\ii',\jj')] - \E[ P(\ii,\jj) ] \E[ P(\ii',\jj')] \Big),
\label{Variance Write}
\end{equation}
where $P(\ii,\jj)$ is the product $X(i_1,j_1)X(i_2,j_1) \cdots X(i_1,j_k)$. We note $\G$ (resp. $\G'$) the graph associated to $(\ii,\jj)$ (resp. $(\ii',\jj')$), the corresponding quantities such as $s$ and $s'$ being defined as before. We also consider the graph $\G \cup \G'$ associated to $i_1 j_1 \ldots i_1 j_k i_1' j_1' \ldots i_1'j_k'$, and introduce $S$ its number of vertices, $A$ its number of edges and $L$ its number of $\jj$-vertices and $\jj'$-vertices. Note that if $\G$ and $\G'$ have no edge in common, then the contribution is zero by independence of $P(\ii,\jj)$ and $P(\ii',\jj')$. We can therefore restrict the sum to pairs of words $\big( (\ii,\jj), (\ii',\jj') \big)$ sharing at least an edge. In this case $\G \cup \G'$ is connected, hence $A \geq S-1$. Moreover, each edge must appear at least twice otherwise the contribution is zero since $P_n$ has mean zero. Therefore, the sum \eqref{Variance Write} rewrites
\begin{multline}
\frac{1}{n^{2(k+1)} M_2(P_n)^{2k}}\sum\limits_{S=1}^{2k+1} \, \, \sum\limits_{L=1}^{S} \, \, \sum \limits_{A=S-1}^{2k} C(S,L) \\
\times \sum\limits_{\mathbf{B} \in \mathcal{B}_{A,2k}} \, \, \, \, \sum\limits_{ (\ii,\jj),(\ii',\jj') \in \mathcal{W}_k(A,S,L,\mathbf{B})} \Big( \E[P(\ii,\jj)P(\ii',\jj')] - \E[ P(\ii,\jj) ] \E[ P(\ii',\jj')] \Big).
\label{Variance Write 2}
\end{multline}
Fix a generic couple $\big( (\ii,\jj),(\ii',\jj') \big)$. Let $e$ be an edge of $\G \cup \mathrm{G}'$. The corresponding $A$-tuple $\mathbf{B} \in \mathcal{B}_{A,2k}$ possesses a coefficient $B_i$ such that $e$ has multiplicity $B_i$. Note $b_i$ (resp. $b_i'$) the multiplicity of $e$ in $(\ii,\jj)$ (resp. $(\ii',\jj')$). We have the relation $b_i+b_i'=B_i$. The contribution of this generic couple in \eqref{Variance Write 2} is therefore
\begin{equation*}
\frac{C(S,L)}{n^{A+2}} \times \left(  \frac{\E[P(\ii,\jj)P(\ii',\jj')]}{\prod\limits_{1 \leq i \leq A} n^{B_i/2-1}M_2^{B_i/2}} \right. 
 \left. - \frac{\E[P(\ii,\jj)]}{ \prod\limits_{1 \leq i \leq A} n^{b_i/2-1}M_2^{b_i/2}  } \times  \frac{\E[ P(\ii',\jj')]}{\prod\limits_{1 \leq i \leq A} n^{b_i'/2-1}M_2^{b_i'/2} }   \right).
\end{equation*}
By assumption \eqref{Assumption Limiting Moment}, the absolute value of the difference inside the parentheses is bounded. This gives the conclusion since $C(S,L) \sim \alpha^L n^S$ and $S \leq A+1$.
\end{proof}
In order to obtain Theorem \ref{Theorem A.S. Convergence}, it remains to show that the sequence $\{M_k\}_{k \geq 1}$ entirely determines a probability law. To that aim, it is enough to prove that $M_k$ does not grow faster than $k^{ck}$ for some positive constant $c$. First, remark that
\[  |\mathcal{W}_k(a,a+1,l,\mathbf{b})|  \leq \frac{{(2k)}^k}{k+1} { 2k \choose k}.   \]
Indeed there are $\frac{1}{a+1} { 2a \choose a}$ rooted planar trees having $a$ edges. Moreover, two elements $(\ii,\jj)$ and $(\ii',\jj')$ in $\mathcal{W}(a,a+1,l,b)$ inducing the same tree differ only by the order in which each edge is browsed in the reading of $(\ii,\jj)$ (resp. $(\ii',\jj')$). For a fixed multiplicity $b_i$ and its associated edge $e$, there are at most ${ 2k \choose b_i}$ different possibilities to place the occurrences of $e$ because a word has $2k$ edges counted with multiplicity. Therefore the number of $(\ii,\jj) \in \mathcal{W}_k(a,s,l,\mathbf{b})$ associated to a fixed tree is bounded by 
\[   \prod\limits_{1 \leq i \leq a}  { 2k \choose b_i }  \sim \prod\limits_{1 \leq i \leq a} \frac{(2k)^{b_i}}{b_i!} \leq (2k)^{2k}.   \]
Using formula \eqref{What is M_k equal to} and the assumption $A_i = O(\gamma^i)$, we obtain the estimation
\[ M_k = O \left( \gamma^{k} \alpha^k k (2k)^{2k+1} \sum\limits_{a=1}^{k} | \mathcal{B}_{a,k} | \right).    \]
Finally the cardinality of $\mathcal{B}_{a,k}$ is bounded by the cardinality of the number of unsorted partitions of the integer $2k$. This last quantity is equal to
\[  \sum\limits_{i=1}^{2k} { 2k-1 \choose i-1}  = 2^{2k-1},   \]
where we summed over the number of partitions of $2k$ in $i$ parts. As a result $M_k = O(k^{ck})$ for some constant $c >0$. This concludes the proof of Theorem \ref{Theorem A.S. Convergence}.\\

The proof of the almost sure convergence of $\mu_{W_n}$ would require a concentration result analogous for instance to \cite[lemma 4.18]{bordenave2012around}. Rather than proving this kind of result, which would be a technical task, we present an alternative approach, specific to the case where the entries have Bernoulli law, borrowed from Bordenave and Lelarge's paper \cite{bordenave2010resolvent}. 

\section{The Bernoulli case}\label{Section the Bernoulli case}
In this section, we study the particular case where $P_n$ is the the centered Bernoulli law of parameter $c/n$, $c$ being a positive number, that is:
\[  P_n \left( - \frac{c}{n} \right) = 1 - \frac{c}{n} \quad \text{and} \quad P_n \left( 1 - \frac{c}{n} \right) =  \frac{c}{n}.  \]
In this case, since the second moment of $P_n$ verifies $M_2(P_n) = c/n + o(c/n)$ as $n \rightarrow \infty$, we set
\[  W_n = \frac{1}{c}X_n X_n^T \]
to simplify notations. We first give another proof for the convergence of the $\mu_{W_n}$, thanks to an interpretation of the hermitization of $X$ as the adjacency matrix of a random bipartite graph $\G_{n,m}$. This makes possible the use of the results of Bordenave and Lelarge in \cite{bordenave2010resolvent} after identifying the local limit of $\G_{n,m}$. In a second part, we give an asymptotic expansion in $1/c$ for the moments of the limiting spectral measure, inspired by Enriquez and Ménard (see \cite{enriquez2015spectra}).
\subsection{Another proof of the convergence}
To obtain the almost sure convergence of $\mu_{W_n}$, we will rather study the convergence of $W'_n = c^{-1} A_n A_n^T$, where $A_n$ is an $n \times m$ matrix having i.i.d. entries with (non-centered) Bernoulli law of parameter $c/n$. It is indeed sufficient because, denoting respectively $F$ and $F'$ the cumulative distribution functions of $\mu_{W_n}$ and $\mu_{W'_n}$, a consequence of Lidskii's inequalities is that:
\[  || F - F' ||_{ \infty} \leq \frac{ \text{rk}(X_n - A_n)}{n},  \]
where $\text{rk}$ is the rank operator. As announced before, we have the following theorem.
\begin{theorem}\label{Theorem Bernoulli case}
There exists a probability law $\mu_{ \alpha,c}$ depending only on $\alpha$ and $c$ such that, almost surely, $\mu_{W'_n}$ converges weakly to $\mu_{ \alpha,c}$. Hence, $\mu_{W_n}$ converges weakly to $\mu_{ \alpha,c}$. 
\end{theorem}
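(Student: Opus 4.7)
The strategy, as announced in the paper, is to hermitize the rectangular matrix $A_n$ into the adjacency matrix of a sparse bipartite random graph, establish local weak convergence of this graph to a random tree, and then invoke the resolvent-based spectral convergence theorem of Bordenave and Lelarge~\cite{bordenave2010resolvent}. Let
\[
B_n = \begin{pmatrix} 0 & A_n \\ A_n^T & 0 \end{pmatrix}
\]
be the symmetric $(n+m)\times(n+m)$ matrix whose eigenvalues are $\pm \sigma_i(A_n)$ together with $|n-m|$ zeros, so that the empirical spectral measure $\mu_{B_n}$ is a deterministic and weakly continuous function of $\mu_{W'_n}$ (push-forward by $x\mapsto x^2/c$, renormalization by $(n+m)/(2n)$ away from $0$, and an explicit correcting atom at $0$ when $\alpha\neq 1$). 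It will therefore suffice to prove the a.s.\ weak convergence of $\mu_{B_n}$. Crucially, $B_n$ is exactly the adjacency matrix of the random bipartite graph $\G_{n,m}$ on $V_n:=\{1,\dots,n\}\sqcup\{1,\dots,m\}$ in which every edge is present, independently, with probability $c/n$.

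Benjamini--Schramm convergence of $\G_{n,m}$ rooted at a uniform vertex $\rho_n \in V_n$ to a bipartite two-type Galton--Watson tree $\mathcal{T}_{\alpha,c}$ then follows from the standard Poisson approximation of binomials: the root is of type $1$ with probability $1/(1+\alpha)$ and of type $2$ with probability $\alpha/(1+\alpha)$, a type-$1$ vertex has a $\mathrm{Poisson}(\alpha c)$ number of children of type $2$, a type-$2$ vertex has a $\mathrm{Poisson}(c)$ number of children of type $1$, and the branches are independent. Since all degrees in $\G_{n,m}$ are stochastically dominated by $\mathrm{Bin}(n\vee m,\,c/n)$, whose moments are uniformly bounded in $n$, the degree distribution is uniformly integrable and the hypotheses of the main theorem of~\cite{bordenave2010resolvent} are satisfied. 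That theorem yields a deterministic symmetric probability measure $\tau_{\alpha,c}$ on $\R$, equal to the annealed expected spectral measure at the root of $\mathcal{T}_{\alpha,c}$ (its Stieltjes transform being characterized by a recursive distributional fixed-point equation on the branches of $\mathcal{T}_{\alpha,c}$), together with the a.s.\ weak convergence $\mu_{B_n} \to \tau_{\alpha,c}$.

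Inverting the hermitization identity of the first paragraph then produces a deterministic probability measure $\mu_{\alpha,c}$, depending only on $\alpha$ and $c$, to which $\mu_{W'_n}$ converges a.s.\ weakly. For the last sentence of the theorem, the matrix $X_n - A_n = -(c/n)\,J_{n,m}$ has rank one, so the Lidskii-type bound $\|F_{\mu_{W_n}}-F_{\mu_{W'_n}}\|_\infty \le \text{rk}(X_n-A_n)/n \le 1/n$ recalled immediately before the theorem shows that $\mu_{W_n}$ has the same a.s.\ weak limit $\mu_{\alpha,c}$. The main obstacle is the careful matching of $\G_{n,m}$ with the hypotheses of~\cite{bordenave2010resolvent}: one must verify uniform integrability of the rooted degree distribution (routine here, since Poisson limits have all moments) and that the a.s.\ (rather than in-probability) form of the conclusion is available in this bipartite sparse setting---this follows from a standard concentration-of-Lipschitz-functions argument applied to the Stieltjes transform of a sparse random graph. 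Once those points are granted, the identification of the local weak limit as $\mathcal{T}_{\alpha,c}$ and the passage from $\mu_{B_n}$ to $\mu_{W'_n}$ and $\mu_{W_n}$ are bookkeeping.
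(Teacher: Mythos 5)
Your proof follows essentially the same route as the paper's: hermitize $A_n$ into the adjacency matrix of the bipartite Erd\H{o}s--R\'enyi graph $\G_{n,m}$, identify its Benjamini--Schramm limit as the two-type Poisson Galton--Watson tree, invoke the Bordenave--Lelarge spectral convergence theorem, and transfer the conclusion back to $\mu_{W'_n}$ and then to $\mu_{W_n}$ via the rank-one Lidskii bound. The argument is correct, and if anything slightly more careful than the paper about the $|n-m|$ extra zero eigenvalues of the hermitization and the explicit rank-one nature of $X_n-A_n$.
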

\begin{remark}
It can be proved that the set of atoms of $\mu_{ \alpha,c}$ is dense in $\mathbf{R}_+$. More precisely, it is the image by $x \mapsto x^2$ of the set of totally real algebraic integers, which coincides with the set of eigenvalues of finite trees as proved in \cite{salez2015every} by Salez. Besides, a consequence of the results of Bordenave, Sen and Virag in \cite{bordenave2013mean} is that $\mu_{ \alpha, c}$ possesses a continuous part if and only if $c>1$. 
\end{remark}

Define the hermitization of $A_n$ as the hermitian matrix:
\begin{equation}
H(A_n) = \begin{pmatrix}
   0 & A_n \\
   A_n^T & 0 
\end{pmatrix}.
\end{equation}
Remark that the spectrum of $H(A_n)$ is $\{ \pm  \sqrt{ \lambda_i(A_n A_n^T) }  \}_{1 \leq i \leq n}$. 
Let $f$ be the bijection of $\mathbf{R}_+$: $f(x)=x^2$. For a measure $\nu$ on $\mathbf{R}_+$ we define $\mathrm{Sym}( \nu )( \cdot ) = ( \nu( \cdot ) + \nu( - \cdot ) )/2$ the symmetrized version of $\nu$. Then 
\[ \mu_{ H(A_n) } = (\mathrm{Sym} \circ f_{*}) \mu_{A_n A_n^T}, \]
where $f_{*} \nu$ is the pushforward of a measure $\nu$ by $f$. Since $ \mathrm{Sym} \circ f_{*} $ defines a bijection between the measures which are supported on $\mathbf{R}_+$ and the symmetric measures on $\mathbf{R}$, it suffices to show the convergence of $\mu_{H(A_n)}$ to obtain Theorem \ref{Theorem Bernoulli case}. To avoid some unpleasant confusions, we will add an apostrophe to the asymptotic measures involved in the proof.

Now, $H(A_n)$ can be interpreted as the adjacency matrix of a random bipartite graph. Let $\mathrm{K}_{n,m}$ be the complete bipartite graph with $n$ and $m$ vertices of each color. The vertices of $\mathrm{K}_{n,m}$ will be denoted $1, \ldots, n+m$, two of them being linked by an edge if and only if one belongs to $\{1, \ldots, n \}$ and the other to $\{n+1, \ldots, n+m \}$. Perform a Bernoulli percolation with parameter $c/n$ on $K_{n,m}$: keep (independently) each edge with probability $c/n$ and remove it with probability $1-c/n$. We denote by $\G_{n,m}$ the resulting random graph. The adjacency matrix of $\G_{n,m}$ has the same law as $H(A_n)$. In the setting of local convergence introduced by Benjamini and Schramm \cite{benjamini2011recurrence} and Aldous and Steele \cite{aldous2004objective}, $\G_{n,m}$ converges in law to a random tree $\T_{ \alpha, c}$ for the local topology. To give a precise statement, we give some definitions in what follows.

For any connected, locally finite graph $\G$ and any vertex $v \in \G$ we will note $(\G,v)$ the class of pointed graphs isomorphic to the graph $\G$ pointed in $v$. For any $r \geq 0$, $[\G,v]_r$ will denote the ball of radius $r$ around $v$ in $\G$ for the graph distance. This induces a topology (called the local topology) on the set $\mathcal{G}^*$ of pointed graphs (up to isomorphism) which are locally finite and connected, making it a separable and complete space. 

For all nonnegative real number $x$ let $\mathcal{P}(x)$ denote the Poisson law with parameter $x$. Let $\T_{ \alpha, c, 1 }$ be the random tree where each individual reproduces independently from each other and such that individuals of an even and an odd generation reproduce respectively according to the laws $\mathcal{P}(c)$ and $\mathcal{P}( \alpha c)$. Let $\T_{ \alpha, c, 2}$ be the random tree where each individual reproduces independently from each other and such that individuals of an even and an odd generation reproduce respectively according to the laws $\mathcal{P}( \alpha c)$ and $\mathcal{P}(c)$. Notice that $\T_{ \alpha, c, 2}$ has the same law as the random tree issued from a children of the root of $\T_{ \alpha, c, 1}$. 

\begin{center}
\includegraphics[scale=0.65]{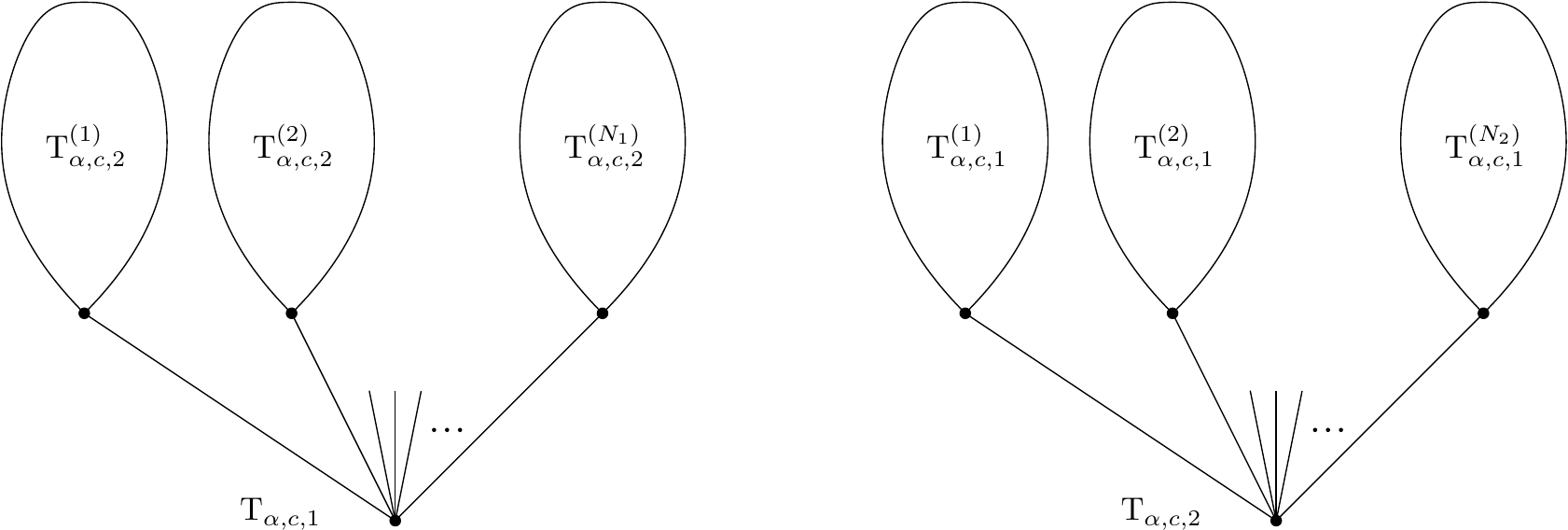} 
\captionof{figure}{The recursive relation between $\T_{ \alpha, c, 1}$ and $\T_{ \alpha, c, 2}$. Here $N_1$ and $N_2$ are independent random variables with law $\mathcal{P}(c)$ and $\mathcal{P}(\alpha c)$; and the $\T_{ \alpha, c, 1}^{(i)}$ (resp. the $\T_{ \alpha, c, 2}^{(i)}$) i.i.d. copies of $\T_{ \alpha, c, 1}$ (resp. $\T_{ \alpha, c, 2}$).}
\label{fig3}
\end{center}

Let $B$ be Bernoulli random variable of parameter $1 / ( \alpha + 1)$ independent of $\T_{ \alpha, c, 1}$ and $\T_{ \alpha, c, 2}$. We define $\mu_{ \alpha, c}'$ as the law of the random tree $\T_{ \alpha, c} := \mathbf{1}_{B=1} \T_{ \alpha, c, 1} + \mathbf{1}_{B=0} \T_{ \alpha, c, 2}$. Let $o$ be a uniformly distributed vertex on $\G_{n,m}$. We define the random probability measure on $\mathcal{G}^*$
\[ U_o(\G_{n,m}) := \delta_{ (\G_{n,m}(o),o)} = \frac{1}{n+m} \sum\limits_{i=1}^{n+m} \delta_{(\G_{n,m}(i),i)}.   \]
Integrating with respect to the randomness of $\G_{n,m}$ gives a new measure $\E[ U_o(\G_{n,m}) ]$ which is characterized by the relation $  \E[ U_o(\G_{n,m}) ](A) = \PP( (\G_{n,m}(o),o) \in A )$ for all measurable set $A \in \mathcal{B}(\mathcal{G}^*)$.

\begin{prop}\label{Local Convergence Theorem}
The deterministic probability measure $\E[U_o(G_{n,m})]$ converges weakly to $\mu_{ \alpha, c}'$. Moreover, if $o_1$ and $o_2$ are two independent copies of $o$, the product $ \E[U_{o_1}(G_{n,m})] \otimes \E[U_{o_2}(G_{n,m})]$ converges weakly to $\mu_{ \alpha, c}' \otimes \mu_{ \alpha, c}'$.
\end{prop}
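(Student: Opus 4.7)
The plan is to prove local convergence of $(\G_{n,m}, o)$ to $\T_{\alpha,c}$ by the standard breadth-first exploration argument for sparse Erdős–Rényi random graphs, adapted to the bipartite setting. A base of neighborhoods for the local topology on $\mathcal{G}^*$ consists of the cylinders $\{(\G,v) : [\G,v]_r \cong (H, v_H)\}$ as $(H, v_H)$ ranges over finite rooted connected graphs and $r \ge 0$, so by the portmanteau theorem it suffices to check that
\[
\PP\big([\G_{n,m}, o]_r \cong (H, v_H)\big) \longrightarrow \mu_{\alpha,c}'\big(\{[\G,v]_r \cong (H, v_H)\}\big)
\]
for every such $r$ and $(H, v_H)$.

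Split on the color of $o$: the root lies in $\{1, \ldots, n\}$ (a type-$\ii$ vertex) with probability $n/(n+m) \to 1/(1+\alpha)$ and in $\{n+1, \ldots, n+m\}$ (a type-$\jj$ vertex) with probability $m/(n+m) \to \alpha/(1+\alpha)$, matching the mixing weights defining $\mu_{\alpha,c}'$. Conditionally on the color, I would explore the $r$-ball of $o$ in $\G_{n,m}$ by breadth-first search. A newly discovered type-$\ii$ (resp.\ type-$\jj$) vertex selects its further neighbors by independent Bernoulli$(c/n)$ trials over the not-yet-attached type-$\jj$ (resp.\ type-$\ii$) candidates, so its number of previously unseen neighbors has distribution Binomial$(m-K, c/n)$ (resp.\ Binomial$(n-K, c/n)$) for some $K$ bounded by the number of opposite-color vertices already exposed. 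The BFS is stochastically dominated by a Galton–Watson process with offspring law Binomial$(\max(n,m), c/n)$, whose mean stays bounded as $n \to \infty$; hence the $r$-ball has size $O_p(1)$ uniformly in $n$, giving $K = O_p(1)$ throughout. Consequently Binomial$(m - O_p(1), c/n)$ and Binomial$(n - O_p(1), c/n)$ converge in law to $\mathcal{P}(\alpha c)$ and $\mathcal{P}(c)$ respectively, reproducing the offspring laws defining $\T_{\alpha,c,1}$ and $\T_{\alpha,c,2}$.

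It remains to rule out cycles in the limit. The probability that the BFS creates an edge from a newly reached vertex to one already discovered is $O(1/n)$ per candidate pair, and there are only $O_p(1)$ such pairs in the $r$-ball, so the total collision probability is $O(1/n) \to 0$. This gives the first assertion. The second statement follows at once: $o_1$ and $o_2$ are identically distributed, so $\E[U_{o_i}(\G_{n,m})] = \E[U_o(\G_{n,m})]$ for $i=1,2$, and weak convergence of a deterministic measure on $\mathcal{G}^*$ automatically yields weak convergence of the tensor product to $\mu_{\alpha,c}' \otimes \mu_{\alpha,c}'$. The main technical obstacle is the uniform-in-$n$ control of the BFS exploration size and of the collision probabilities in the bipartite setting; once these are in hand, the Binomial-to-Poisson limit and the identification of the root color weights are immediate.
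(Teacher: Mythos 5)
Your treatment of the first assertion is correct and is essentially the argument the paper has in mind: the paper's one\--line ``combinatorial argument'' is exactly your reduction to cylinder events $\{[\G,v]_r\cong (H,v_H)\}$, followed by the breadth-first exploration, the Binomial-to-Poisson limit for the two colour classes, and the $O(1/n)$ collision bound ruling out cycles. (Be slightly careful when you multiply an $O_p(1)$ number of candidate pairs by an $O(1/n)$ collision probability: one should first condition on the ball having at most $M$ vertices, use tightness of the dominating Galton--Watson tree to make the complementary event uniformly small, and only then let $n\to\infty$. This is routine but worth a sentence.)

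The second assertion is where your proposal misses the point. You read $\E[U_{o_1}(\G_{n,m})]\otimes \E[U_{o_2}(\G_{n,m})]$ literally as the tensor product of two deterministic measures, in which case the claim is an empty corollary of the first part --- and that should have been a warning sign. The content the proposition is actually required to carry (and the one the paper's proof addresses) is the convergence of the \emph{annealed two-point measure} $\E\bigl[U_{o_1}(\G_{n,m})\otimes U_{o_2}(\G_{n,m})\bigr]$, where $o_1,o_2$ are independent uniform roots in the \emph{same} realization of $\G_{n,m}$. This is the second-moment condition equivalent to convergence in probability of the random empirical measure $U(\G_{n,m})$ to the deterministic limit $\mu'_{\alpha,c}$, which is what \cite[Theorem 5]{bordenave2010resolvent} needs in order to yield almost sure convergence of $\mu_{H(A_n)}$ rather than convergence of its expectation. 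The missing ingredient is precisely the remark the paper supplies: two independent uniform vertices are at graph distance greater than $2r$ with probability tending to $1$, so their $r$-balls can be explored on disjoint vertex sets and are asymptotically independent; combined with the first part this gives $\E[U_{o_1}\otimes U_{o_2}]\to \mu'_{\alpha,c}\otimes\mu'_{\alpha,c}$. You should add this decoupling step; without it the subsequent application of Bordenave--Lelarge is not justified. As a minor side remark, when you match the root colour to the two trees, note that a root in $\{1,\dots,n\}$ (probability $\to 1/(1+\alpha)$) has asymptotically $\mathcal{P}(\alpha c)$ neighbours, i.e.\ it corresponds to the tree whose root reproduces according to $\mathcal{P}(\alpha c)$; make sure your identification of which of $\T_{\alpha,c,1}$, $\T_{\alpha,c,2}$ receives which mixing weight is consistent with the offspring laws and not only with the weights.
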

\begin{proof}
The first part is a combinatorial argument that shows that $\PP( [\G_{n,m},1]_r \equiv t )$ converges to $\PP( [ \T_{\alpha,c,1}, \rho]_r \equiv t)$ as $n \rightarrow + \infty$ for all $r \geq 1$ and all rooted planar tree $t$ of depth $r$. For the second part, it suffices to remark that to independent uniform vertices $o_1$ and $o_2$ are almost surely at distance greater than $r$ as $n \rightarrow + \infty$, for any $r \geq 1$.
\end{proof}
Let us discuss the consequences of this proposition. It implies the validity of the main assumptions of the convergence theorem of Bordenave and Lelarge \cite[theorem 5]{bordenave2010resolvent}, relative to the empirical spectral measure of the adjacency matrix of large graphs having a local limit. What remains to check is the uniform integrability of the sequence of degrees $\{\deg_{ \G_{n,m} }(o)\}_{n \geq 1}$, which holds. The existence of a probability law $\mu_c'$ such that almost surely $\mu_{H(A_n)}$ converges weakly to $\mu_c'$ is then a direct application of a result of Bordenave and Lelarge \cite[theorem 5]{bordenave2010resolvent}. We also get a description of the Stieltjes transform of $\mu_c'$. Indeed \cite{bordenave2010resolvent} shows that there exists a unique pair of probability laws $(\mathcal{L}_1, \mathcal{L}_2)$ on the set of analytic functions on $\mathbf{C}_+ = \{ z \in \mathbf{C}: \, \, \mathrm{Im}(z) > 0 \}$ such that for all $z \in \mathbf{C}_+$:
\[ \left\{ \begin{array}{l}
        X_1(z) \overset{ (d) }{=} - \left(z + \sum\limits_{i=1}^{N_1} X_2^{(i)}(z) \right)^{-1} \\
        X_2(z) \overset{ (d) }{=} - \left(z + \sum\limits_{i=1}^{N_2} X_1^{(i)}(z) \right)^{-1},
    \end{array} 
    \right. 
\]
where $X_1$ and $X_2$ are independent random variables having laws $\mathcal{L}_1$ and $\mathcal{L}_2$, the $X_1^{(i)} $ and $X_2^{(i)}$ are i.i.d. copies of $X_1$ (resp. $X_2$), $N_1$ has law $\mathcal{P}(c)$ and $N_2$ has law $\mathcal{P}( \alpha c)$; each of these variables being independent. Then, the Stieltjes transform of $\mu_c'$ is given by:
\begin{equation}
 \forall z \in \mathbf{C}_+, \quad m_{\mu_c'}(z) := \int_{ \mathbf{R}} \frac{1}{x-z} \mathrm{d}\mu_c'(x) = \frac{1}{\alpha + 1} \E[ X_1(z) ] + \frac{\alpha}{\alpha + 1} \E[X_2(z)].
\end{equation}
This concludes the proof of theorem \ref{Theorem Bernoulli case} the limiting law being $\mu_c = (\mathrm{Sym} \circ f_*)^{-1}\mu_c'$.
\subsection{Asymptotic expansion of the moments}
Combining Theorem \ref{Theorem A.S. Convergence} with Theorem \ref{Theorem Bernoulli case}, we deduce that, almost surely, $\mu_{W_n}$ converges weakly to a probability law $\mu_{ \alpha, c}$ which is characterized by its sequence of moments. In that case, the asymptotic $\mathcal{A} = \{ A_k \}_{k \geq 1}$ of the laws $P_n$ which are Bernoulli laws of parameter $c/n$ is given by:
\begin{equation*}  A_k = \lim_{ n \rightarrow + \infty }   \frac{1}{n^{k/2-1}}  \frac{( 1 - c/n )^k (c/n) + (-c/n)^k (1-c/n) }{ [( 1 - c/n )^2 (c/n) + (-c/n)^2 (1-c/n)]^{k/2}  }  = c^{1 - k/2} \mathbf{1}_{ k > 1}. 
\end{equation*}
This leads to the following formula for the $k$-th moment of $\mu_{ \alpha,c}$:
\begin{equation}
M_k( \mu_{ \alpha,c}) =  \sum\limits_{a=1}^{k} \sum\limits_{l=1}^{a} \alpha^l \sum\limits_{   \substack{  b_1 \geq b_2 \geq \ldots \geq b_a \geq 2 \\ b_1 + b_2 + \cdots + b_a = 2k   } } | \mathcal{W}(a,a+1,l,b)| \prod\limits_{i=1}^{a} c^{1 - b_i/2}.
\label{Formula In The Bernoulli Case}  
\end{equation}
When $c \rightarrow + \infty$, we retrieve the moments of the Marchenko-Pastur law $\mu_{ \alpha }$. It is therefore natural to try to understand how $\mu_{ \alpha, c}$ differs from $\mu_{\alpha}$ when $c$ is large but finite. We give an answer to this question by giving an asymptotic expansion in $1/c$ of the moments of $\mu_{ \alpha,c}$. This is done by a more careful treatment of equation \eqref{Formula In The Bernoulli Case}, which is combinatorial in nature. The method is inspired by the paper \cite{enriquez2015spectra} of Enriquez and Ménard, where the authors treated the case of adjacency matrices of Erdös-Rényi graphs with parameter $c/n$. 
\begin{theorem}\label{Theorem Asympt Dvpt Bernoulli case}
There exists a signed measure $\mu_{\alpha}^{(1)}$ such that for all $k \geq 1$, as $c \rightarrow +\infty$:
\begin{equation}
M_k(\mu_{\alpha,c}) = M_k \left( \mu_{ \alpha} + \frac{1}{c} \mu_{\alpha}^{(1)} \right) + o \left( \frac{1}{c} \right).
\label{DVLPTAsymptoticTheorem}
\end{equation}
Moreover, the measure $\mu_{\alpha}^{(1)}$ has a total mass zero and the following density:
\begin{equation}
\frac{x^2-2x(\alpha+1)+(\alpha^2+1)}{2 \alpha \sqrt{(b-x)(x-a)}} \mathbf{1}_{(a,b)}.
\label{Density Of The Perturbation In the Theorem}
\end{equation}
\end{theorem}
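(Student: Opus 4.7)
The plan is to expand the moment formula \eqref{Formula In The Bernoulli Case} in powers of $1/c$ and match the $c^{-1}$ coefficient with the moments of the claimed density via a Chebyshev substitution. Because $\sum_i b_i = 2k$, the weight $\prod_i c^{1 - b_i/2}$ collapses to $c^{a - k}$, so \eqref{Formula In The Bernoulli Case} is itself a polynomial in $c^{-1}$. The term $a = k$ forces $b_i = 2$ for every $i$ and reproduces $M_k(\mu_\alpha)$ by the Corollary to Theorem~\ref{Theorem A.S. Convergence}. By Remark~\ref{Remark on the parity} each $b_i$ is even; with $b_i \geq 2$ and $\sum b_i = 2k$, the case $a = k - 1$ is forced to $(b_1, \ldots, b_{k-1}) = (4, 2, \ldots, 2)$, and all $a \leq k - 2$ contribute $O(c^{-2})$. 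Thus
\[
M_k(\mu_{\alpha,c}) \;=\; M_k(\mu_\alpha) \;+\; \frac{1}{c}\,N_k \;+\; O(c^{-2}), \qquad N_k := \sum_{l = 1}^{k-1} \alpha^l\,\bigl|\mathcal{W}_k\bigl(k-1,\,k,\,l,\,(4, 2, \ldots, 2)\bigr)\bigr|,
\]
and the theorem reduces to proving $N_k = M_k(\mu_\alpha^{(1)})$ together with $\mu_\alpha^{(1)}(\mathbf{R}) = 0$.

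\textbf{Combinatorial identification of $N_k$.}
A representative of $\mathcal{W}_k(k-1, k, l, (4, 2, \ldots, 2))$ consists of a rooted planar tree $T$ with $k-1$ edges and $l$ odd-generation vertices, a marked edge $e \in E(T)$ traversed four times, and a closed walk at the root visiting $e$ four times and every other edge twice. Write $T \setminus \{e\} = T_1 \sqcup T_2$ with $\rho, v \in T_1$ and $w \in T_2$, where $v,w$ are the endpoints of $e$. The four crossings of $e$ split the walk into the contour walk of $T_1$ together with two ``detours'' at $v$ into $T_2$. Each detour visits every edge of $T_2$ an even number of times, and the two detours together visit each edge exactly twice; hence each detour is the contour walk of a subtree of $T_2$ containing $w$, and the two subtrees partition $E(T_2)$. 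A short ``parent edge is in the same part as its child'' argument shows that this partition is constant on each child-subtree of $w$ in $T_2$, giving $2^{d_w}$ admissible partitions, where $d_w$ is the $T_2$-degree of $w$. The two detour insertion points are then selected among the visits of $v$ in the contour walk of $T_1$, contributing a factor $\binom{m_v}{2}$. This reduces $N_k$ to an explicit weighted sum over rooted planar trees of size $k - 1$ with a marked edge, with weight $\alpha^l \binom{m_v}{2} 2^{d_w}$.

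\textbf{Chebyshev matching, mass zero, and main obstacle.}
Under the Joukowski change of variable $x = \alpha + 1 + 2\sqrt{\alpha}\cos\theta$, the support $[a, b]$ maps bijectively to $[0, \pi]$, $\sqrt{(b - x)(x - a)}$ becomes $2\sqrt{\alpha}\sin\theta$, and the numerator $x^2 - 2x(\alpha + 1) + (\alpha^2 + 1) = (x - (\alpha + 1))^2 - 2\alpha$ becomes $2\alpha\cos(2\theta)$. Hence $M_k(\mu_\alpha^{(1)})$ reduces to a Fourier integral proportional to $\int_0^\pi (\alpha + 1 + 2\sqrt{\alpha}\cos\theta)^k \cos(2\theta)\, d\theta$. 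Expanded in the Chebyshev basis $U_n(\cos\theta)$, this extracts a specific coefficient whose associated generating function is that of rooted planar trees refined by odd-generation vertices, with discriminant $(1 - z(\alpha + 1))^2 - 4\alpha z^2$ corresponding exactly to the support $[a, b]$. Matching the Fourier coefficient with the combinatorial sum from the previous paragraph yields $N_k = M_k(\mu_\alpha^{(1)})$. The total mass zero assertion follows from the $k = 0$ Fourier identity $\int_0^\pi \cos(2\theta)\, d\theta = 0$. The main obstacle is the combinatorial matching itself: the interleaving factor $\binom{m_v}{2}$ and the partition factor $2^{d_w}$ must be resummed over pairs $(T, e)$ and recognized as the correct Chebyshev coefficient, a delicate algebraic computation directly inspired by (and more involved than) the Enriquez--Ménard treatment of the Erdös-Rényi correction to the semicircle law in \cite{enriquez2015spectra}.
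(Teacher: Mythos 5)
Your reduction of \eqref{Formula In The Bernoulli Case} to $M_k(\mu_{\alpha,c}) = M_k(\mu_\alpha) + c^{-1}N_k + O(c^{-2})$ with $N_k = \sum_l \alpha^l |\mathcal{W}_k(k-1,k,l,(4,2,\ldots,2))|$ is correct and is exactly the paper's starting point. The trouble begins with your enumeration of $\mathcal{W}_k(k-1,k,l,(4,2,\ldots,2))$. First, the factor $2^{d_w}$ overcounts: an equivalence class of words is the same thing as a walk pattern, and the planar order on the children of $w$ is itself induced by the walk (children are ordered by first visit). Hence the first detour explores an initial block of the children of $w$ in planar order and the second detour the complementary final block, so a fixed planar $T_2$ carries only $d_w+1$ admissible splittings, not $2^{d_w}$; equivalently, the two detour subtrees must be counted as an \emph{ordered pair of planar forests} whose concatenation is the child-forest of $w$, which is what produces the factor $B^2$ in the paper's recursion $A^{(1,1)}=\alpha z^2 A^3 B^2$. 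Already for $T_2$ a cherry ($d_w=2$, two leaf children) you count $4$ configurations where there are only $3$ word classes, since assigning one leaf to each detour gives a single class after relabeling. Second, $\binom{m_v}{2}$ misses the configurations in which the two detours are inserted at the same visit of $v$ (the excursion between the two double-crossings of $e$ being empty); the correct factor is $\binom{m_v+1}{2}$, which is what reproduces the paper's $\binom{p+1}{2}$ in its first-generation decomposition. With these miscounts your $N_k$ is not $|\mathcal{W}_k(k-1,k,l,(4,2,\ldots,2))|$ summed correctly, so the proposed matching cannot close.

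Even leaving the counting aside, the step you label ``the main obstacle'' --- resumming the weighted tree sum and recognizing it as the Chebyshev coefficient attached to the density \eqref{Density Of The Perturbation In the Theorem} --- is the actual content of the theorem and is asserted rather than proved. The paper does not attempt a term-by-term Fourier matching: it derives a first-generation recursion for the generating series of the corrected counts, $A^{(1)}=\alpha z A^2 B^{(1)}+\alpha z^2 A^3 B^2$ and $B^{(1)}=zA^{(1)}B^2+z^2A^2B^3$, solves this linear system using the Marchenko--Pastur relations $A=1+\alpha zAB$, $B=1+zAB$ to get the closed form \eqref{premiere formule pour A1}, and then reads off the density by Stieltjes inversion of $S^{(1)}(z)=-z^{-1}A^{(1)}(z^{-1})$; total mass zero is immediate from $a_0^{(1)}=0$. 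Your observation that $\int_0^\pi \cos(2\theta)\,\mathrm{d}\theta=0$ only verifies that the \emph{stated} density has mass zero; it does not establish that the combinatorial coefficients are the moments of that density. To complete your argument you would need both to repair the count and to actually carry out the identification, for which the generating-function route is the natural (and essentially unavoidable) tool.
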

\begin{proof}
Fix an integer $k \geq 1$. First, Remark \ref{Remark on the parity} ensures that all the $b_i$'s in \eqref{Formula In The Bernoulli Case} are even. Therefore, we can rewrite:
\begin{equation}
M_k( \mu_{ \alpha,c}) =  \sum\limits_{a=1}^{k} \sum\limits_{l=1}^{a} \alpha^l \sum\limits_{   \substack{  d_1 \geq d_2 \geq \ldots \geq d_a \geq 1 \\ d_1 + d_2 + \cdots + d_a = k   } } | \mathcal{W}_k(a,a+1,l,2d)| \prod\limits_{i=1}^{a} c^{1 - d_i}.
\label{Formula In The Bernoulli Case 2}  
\end{equation}
As $c \rightarrow + \infty$, nonvanishing terms correspond to the case where all the $d_i$'s are equal to $1$. This forces $a$ to be equal to $k$ and leads to:
\[  M_k( \mu_{ \alpha,c}) =  \sum\limits_{l=1}^{a} \alpha^l  | \mathcal{W}_k(k,k+1,l,(2, \ldots ,2))|  + o(1) \]
as $c \rightarrow +\infty$. 

Recall that $\mathcal{W}_k(k,k+1,l,(2, \ldots ,2))$ is a set of representatives of closed words starting at the root, of length $2k+1$ on labeled planar rooted trees having $k$ edges, $l$ of these being odd edges. This allows to write:
\[  M_k( \mu_{ \alpha,c}) =  \sum\limits_{ \T \in \mathcal{T}_k } \alpha^{l(\T)}  + o(1)  \]
as $c \rightarrow +\infty$, where $\mathcal{T}_k$ is the set of planar rooted trees having $k$ edges and $l(\T)$ the number of odd edges in a given tree $\T \in \mathcal{T}_k$. For convenience, we introduce the notations 
\[  a_k := \sum\limits_{ \T \in \mathcal{T}_k } \alpha^{l(\T)} \quad \text{and} \quad  b_k := \sum\limits_{ \T \in \mathcal{T}_k } \alpha^{ \overline{l}(\T)} , \]
where $\overline{l}(\T)$ is the number of even edges of a given tree $\mathrm{T} \in \mathcal{T}_k$.

It turns out that the $a_k$'s are the moments of $\mu_{ \alpha }$. To obtain the term of order $1/c$ we will need to compute the generating series of the $a_k$'s and $b_k$'s. 

Let $\T$ be a planar tree having $k+1$ edges. Let $\T_1$ be the tree induced by the first child of the root and $\T_2$ the connected component of the root after removing the edge between the root and its first child (see Figure \ref{fig5}).
{\begin{center}
\includegraphics[scale=0.5]{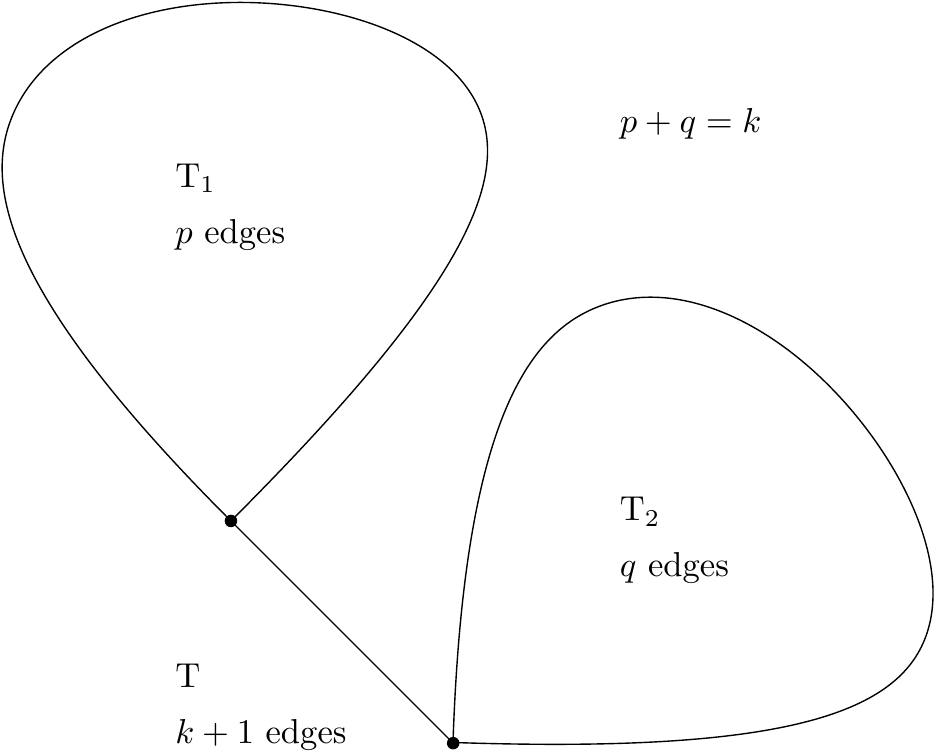} 
\captionof{figure}{Decomposition of a planar tree.}
\label{fig5}
\end{center}}
Denoting $p$ (resp. $q$) the number of edges of $\T_1$ (resp. $\T_2$), we have $p+q=k$. It is straightforward to obtain the relations $l(\T) = 1 + \overline{l}(\T_1) + l(\T_2)$ and $\overline{l}(\T) = l(\T_1) + \overline{l}(\T_2)$. Therefore 
\begin{equation}
\left\{ \begin{array}{l}
        a_{k+1} = \alpha \sum\limits_{p+q=k} a_pb_q \\
        b_{k+1} = \phantom{ \alpha} \sum\limits_{p+q=k} a_pb_q.
    \end{array} 
    \right. 
\end{equation} 
Denoting $A(z) = \sum_{k \geq 0} a_k z^k$ and $B(z) = \sum_{k \geq 0} b_k z^k$ the generating functions of the $a_k$'s and the $b_k$'s we obtain the functional relations:
\begin{equation}
\left\{ \begin{array}{l}
        A = 1 + \alpha z A B \\
        B = 1 + z A B .
    \end{array} 
    \right. 
\label{LinearRelationBetweenAandB}
\end{equation} 
It implies that $zA^2 + ( \alpha z - z -1)A +1 =0$. If we denote $S(z) := -z^{-1}A(z^{-1})$ the Stieltjes transform of the measure with moments $a_k$'s, then $S$ satisfies the equation:
\begin{equation} 
zS^2 - (\alpha - z - 1) S +1 =0. 
\label{Equation on the Stieltjes transform} 
\end{equation}
The function $S$ of the variable $z \in \mathbf{C}_+$ is the limit of the Stieltjes transform of the $\mu_{W_n}$ when $c \rightarrow +\infty$. The imaginary part of a Stieltjes transform is positive: this allows us to choose the right solution for equation \eqref{Equation on the Stieltjes transform}. For a complex $z$, if we denote $\sqrt{z}$ the square root having a positive imaginary part on the upper half plane:
\begin{equation}
S(z) = \frac{\alpha - z - 1  + \sqrt{(z-b)(z-a)}}{2z},
\label{Stieltjes formula S}
\end{equation}
where $a = (1 - \sqrt{ \alpha } )^2$ and $ b = (1 + \sqrt{ \alpha } )^2$. This is the Stieltjes transform of the Marchenko-Pastur law $\mu_{ \alpha}$, as announced.
\\

Let us compute the perturbation of order $1/c$. It arises when all the $d_i$'s are equal to $1$ except one which is equal to $2$ in \eqref{Formula In The Bernoulli Case 2}. This forces $a$ to be equal to $k-1$ and leads to the following expansion as $c \rightarrow +\infty$:
\[   M_k( \mu_{\alpha,c} ) = M_k( \mu_{ \alpha} ) + \frac{1}{c} \sum\limits_{ l = 1}^{k-1} \alpha^{l} | \mathcal{W}_k(k-1,k,l,(4,2, \ldots, 2))| + o \left( \frac{1}{c} \right).   \]
In that case $\mathcal{W}_k(k-1,k,l,(4,2, \ldots, 2)) $ is the set of equivalence classes of closed words of length $2k+1$ on labeled planar rooted tree having $k-1$ edges, starting at the root and such that each edge is browsed exactly two times except one which is browsed four times. Let us denote 
\[ a_k^{(1)} = \sum\limits_{ l = 1}^{k-1} \alpha^{l} | \mathcal{W}_k(k-1,k,l,(4,2, \ldots, 2))|, \] 
and
\[ b_k^{(1)} = \sum\limits_{ \overline{l} = 1}^{k-1} \alpha^{l} | \mathcal{W}_k(k-1,k,l,(4,2, \ldots, 2))|. \] 
The associated generating series will be denoted $A^{(1)}$ and $B^{(1)}$. Remark that by definition $a_0^{(1)} = a_1^{(1)} = b_0^{(1)}= b_1^{(1)} = 0$. We are going to obtain a recursion linking the four generating series $A,B,A^{(1)}$ and $B^{(1)}$. The idea is to use a first generation decomposition of the planar rooted tree on which the words are written, and then to distinguished whether or not the quadruple edge is an edge of this generation. For all $k \geq 1$, we use the partition
\[  \mathcal{W}_k(k-1,k,l,(4,2, \ldots, 2)) = \mathcal{W}_k^{(0)}(k-1,k,l,(4,2, \ldots, 2)) \bigsqcup \mathcal{W}_k^{(1)}(k-1,k,l,(4,2, \ldots, 2)),  \]
where $\mathcal{W}_k^{(0)}(k-1,k,l,(4,2, \ldots, 2))$ is the set of representative belonging to $\mathcal{W}_k(k-1,k,l,(4,2, \\ \ldots, 2))$ such that the quadruple edge is not a first generation edge, and $\mathcal{W}_k^{(1)}(k-1,k,l,(4,2, \ldots, 2))$ is the set of representatives belonging to $\mathcal{W}_k(k-1,k,l,(4,2, \ldots, 2))$ such that the quadruple edge is a first generation edge. The associated quantities will be denoted $a_k^{(1,0)},a_k^{(1,1)},A^{(1,0)},...$ For example:
\[  a_k^{(1,0)} = \sum\limits_{ l = 1}^{k-1} \alpha^{l} | \mathcal{W}_k^{(0)}(k-1,k,l,(4,2, \ldots, 2))|.  \]
A representative word $(\ii,\jj) \in \mathcal{W}_k(k-1,k,l,(4,2, \ldots, 2))$ can be written:
\[   (\ii,\jj) = i_1 \mathbf{S}_1 \zeta \xi \mathbf{S}_2 \xi \zeta \mathbf{S}_3 \zeta \xi \mathbf{S}_4 \xi \zeta \mathbf{S}_5 i_1,  \]
where:
\begin{enumerate}
\item $i_1 \mathbf{S}_1 \zeta \mathbf{S}_5 i_1$ is the contour of a planar tree having $p_1$ edges;
\item $\xi \mathbf{S}_2 \xi$ is the contour of a planar tree having $p_2$ edges;
\item $ \zeta \mathbf{S}_3 \zeta$ is the contour of a planar tree having $p_3$ edges;
\item $\xi \mathbf{S}_4 \xi$ is the contour of a planar tree having $p_4$ edges;
\item $\xi \mathbf{S}_2 \xi \mathbf{S}_4 \xi$ is the contour of a planar tree having $p_2+p_4$ edges.
\end{enumerate}
The above integers satisfy $p_1+p_2+p_3+p_4 = k-2$. See Figure \ref{fig6} for an illustration.
\begin{center}
\includegraphics[scale=0.75]{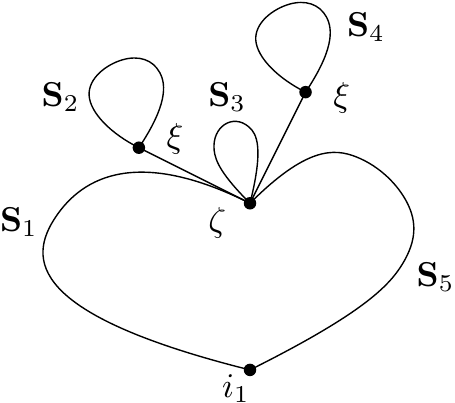} 
\captionof{figure}{The writing $(\ii,\jj)$ and its quadruple edge $\{ \zeta, \xi\}$.}
\label{fig6}
\end{center}
All of these conditions are sufficient to define a class of canonical representatives. Let $\T$ be the planar rooted tree on which a representative word $(\ii,\jj)$ is written. Denote $e_4$ the quadruple edge, $\T \setminus e_4$ the connected component of the root after removing $e_4$ and $\T^{e_4}$ the planar rooted tree formed by the descendants of $e_4$. Then, the above conditions ensures that $(\ii,\jj)$ is such that $\T \setminus e_4$ and $\T^{e_4}$ are respectively browsed in lexicographic order.

Let $(\ii,\jj) \in \mathcal{W}_k^{(0)}(k-1,k,l,(4,2, \ldots, 2))$. The underlying tree can have $p \in \{1, \ldots, k-2\}$ edges which are all browsed two times by $(\ii,\jj)$. One of the tree induced by the children of the root contains the quadruple edge, leading to $p$ different choices. On another side, if $(\ii,\jj) \in \mathcal{W}_k^{(1)}(k-1,k,l,(4,2, \ldots, 2))$ then the underlying tree can have $p \in \{1, \ldots, k-1\}$ edges out of which one is the quadruple edge. There are ${p+1 \choose 2}$ choices for the locations of the the visits of the quadruple edge. See Figure \ref{fig7} for an illustration.
\begin{center}
\includegraphics[scale=0.65]{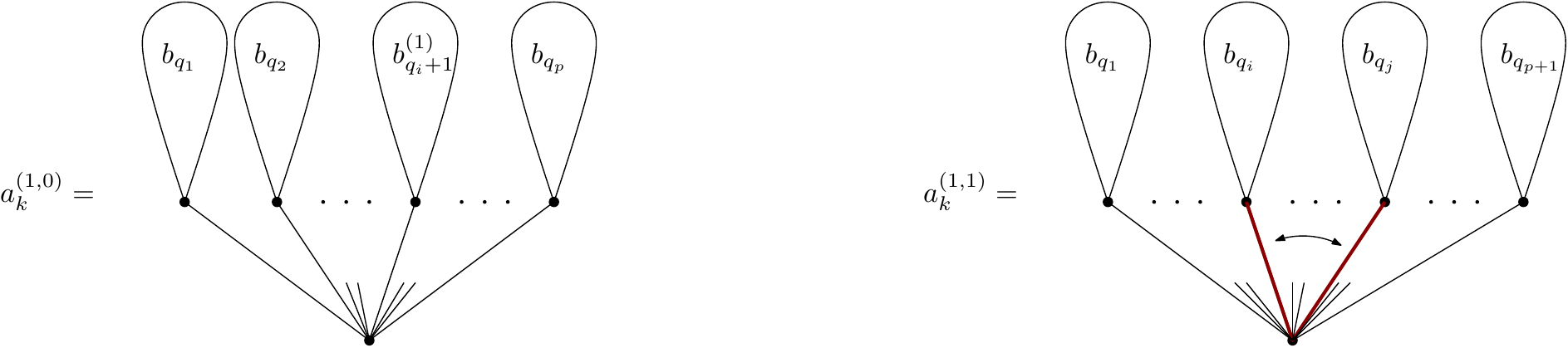} 
\captionof{figure}{First edge decomposition of a word respectively in $\mathcal{W}_k^{(0)}(k-1,k,l,(4,2, \ldots, 2))$ on the left and in $\mathcal{W}_k^{(1)}(k-1,k,l,(4,2, \ldots, 2))$ on the right, where the quadruple edge is in red.}
\label{fig7}
\end{center}

As a consequence, we get the following recursions:
\begin{equation*}
a_k^{(1,0)} = \sum\limits_{p=1}^{k-2} \alpha^p p \sum\limits_{q_1 + \cdots + q_p = k-p-1} b_{q_1+1}^{(1)} b_{q_2} \cdots b_{q_p}, \phantom{bblbllbb}
\label{Equation 1st gen out}
\end{equation*}
and
\begin{equation*}
a_k^{(1,1)} = \sum\limits_{p=1}^{k-2} \alpha^p {p+1 \choose 2} \sum\limits_{q_1 + \cdots + q_{p+1} = k-p-1} b_{q_1} b_{q_2} \cdots b_{q_{p+1}}.
\end{equation*}
This yields 
\[ A^{(1,0)} = \frac{\alpha z B^{(1)}}{(1- \alpha z B)^2} = \alpha z A^2 B^{(1)}
\]
and
\[ A^{(1,1)}= \frac{\alpha z^2 B^2}{(1- \alpha z B)^3} = \alpha z^2 A^3 B^2,
\]
where we used equation \eqref{LinearRelationBetweenAandB}. The same arguments and computations give $B^{(1,0)} = z A^{(1)} B^2$ and $B^{(1,1)} = z^2 A^2 B^3$, to finally obtain
\begin{equation}
\left\{ \begin{array}{l}
        A^{(1)} = \alpha z A^2 B^{(1)} + \alpha z^2 A^3 B^2 \\
        B^{(1)} = z A^{(1)} B^2 + z^2 A^2 B^3.
    \end{array} 
    \right. 
\end{equation} 
We deduce, using equation \eqref{LinearRelationBetweenAandB}, that $A^{(1)}$ is given by:
\begin{equation}
A^{(1)} = \frac{\alpha (zAB)^2}{1- \alpha (zAB)^2}(zA^2B+A) = \frac{AB}{1- \alpha (zAB)^2} \alpha (zAB)^2.
\label{premiere formule pour A1}
\end{equation}
To obtain a more explicit formula for $A^{(1)}$, one can compute $\alpha(zAB)^2$ using first that $B=(A+\alpha-1)/ \alpha$ and then that $zA^2 = (1-(\alpha-1)z)A-1$. After simplifications:
\begin{equation}  \alpha (zAB)^2 = \frac{(1- \alpha z - z)A +z -1}{\alpha z} = \frac{(\alpha^2+1)z^2-2z(\alpha-1)+1-(1-\alpha z - z) \sqrt{\delta}}{2 \alpha z^2}, 
\label{equation pour A}
\end{equation}
since $A = (2z)^{-1}(1-( \alpha-1)z-\sqrt{ \delta})$. Using that $\sqrt{\delta}=-2zA-(\alpha-1)z+1$, one can then check that $\sqrt{ \delta} AB = 1 - \alpha (zAB)^2$. From \eqref{equation pour A}, one can finally rewrite \eqref{premiere formule pour A1} as
\begin{equation*}
A^{(1)} = \frac{1}{\sqrt{\delta}} \frac{(\alpha^2+1)z^2-2z(\alpha+1)+1-(1-\alpha z - z) \sqrt{\delta}}{2 \alpha z^2}.
\end{equation*}
Therefore, the function $S^{(1)}(z) = -\frac{1}{z}A^{(1)}( \frac{1}{z})$ is given by
\begin{equation}
S^{(1)}(z) =   - \frac{z^2-2z( \alpha+1) +  (\alpha^2+1)}{2 \alpha \sqrt{(z-b)(z-a)}} + \frac{z-\alpha-1}{2 \alpha}.
\end{equation}
It corresponds to the Stieltjes transform of the measure $\mu_{\alpha}^{(1)}$ with density:
\[  -\frac{1}{\pi} \lim\limits_{ \varepsilon \rightarrow 0} \mathrm{Im} \big( S^{(1)}(x+ i \varepsilon) \big) = \frac{x^2-2x(\alpha+1)+(\alpha^2+1)}{2 \alpha \pi \sqrt{(b-x)(x-a)}} \mathbf{1}_{(a,b)}.   \]
This concludes the proof of Theorem \ref{Theorem Asympt Dvpt Bernoulli case}.
\end{proof}

The case $\alpha =1$, which corresponds to asymptotic square matrices $X_n$, should be emphasized. In this setting the density is
\[   \frac{1}{2 \pi} \frac{x^2-4x+2}{\sqrt{x(4-x)}} \mathbf{1}_{[0,4]},   \]
which corresponds to the pushforward by $x \mapsto x^2$ of the density obtained in \cite{enriquez2015spectra} by Enriquez and Ménard for the Wigner case, as expected.

\section{Heavy tailed random matrices}\label{Heavy tailed random matrices}
In this section we use Theorem \ref{Theorem A.S. Convergence} to study the spectral measure associated to heavy tailed Wishart matrices. For all $n\geq 1$, let $X_n$ be a random matrix of size $n \times m$ having i.i.d. entries with heavy tailed law $P$. As before, we suppose that the ratio $m/n$ converges to $\alpha>0$. We will consider the case where $P$ has density
\[   \frac{C(\beta)}{1+|x|^{\beta}},  \]
where $1< \beta <3$ and $C(\beta) = ( \int_{ \mathbf{R}} (1+|x|^{ \beta})^{-1} \mathrm{d}x )^{-1}$. Theorem $1.10$ of Belinschi, Dembo and Guionnet in \cite{belinschi2009spectral} ensures that, since $P$ is in the domain of attraction of a $(\beta-1)$-stable law, the spectral measure of 
\[ n^{ -\frac{2}{\beta-1} } X_nX_n^T\]
converges to a deterministic probability law $\mu_{ \alpha, \beta}$ depending only on $\alpha$ and $\beta$. 

To apply Theorem \ref{Theorem A.S. Convergence}, let us consider the truncated version of $X_n$. For all $n \geq 1$, let $P_n$ be the probability law given by
\[   P_n(\mathrm{d}x) = \frac{C}{1+|x|^{ \beta}} \mathbf{1}_{[-B n^{1/(\beta-1)},B n^{1/(\beta-1)}]} \mathrm{d}x + Z(B,\beta) \big( \delta_{-B n^{1/(\beta-1)}}( \mathrm{d}x) + \delta_{B n^{1/(\beta-1)}} ( \mathrm{d}x) \big),  \]
where $B>0$ and $Z(B,\beta) = 2C(\beta) \int_{B n^{1/(\beta-1)} }^{+ \infty} (1+|x|^{ \beta})^{-1} \mathrm{d}x$. In words, $P_n$ is the truncation of $P$ at $-B n^{1/(\beta-1)}$ and $B n^{1/(\beta-1)}$. We will denote $Y_n$ the random matrix of size $n \times m$ with i.i.d. entries having law $P_n$. Let us compute the asymptotic $\mathscr{A} = \{A_k\}_{ k \geq 2}$ of the sequence $\{P_n\}_{n \geq 1}$. For all $k \geq 1$, as $n$ tends to infinity:
\begin{align*}
\frac{M_k(P_n)}{n^{k/2-1} M_2(P_n)^{k/2}} &\sim n^{1-k/2} (2C)^{1-k/2} \, \frac{\int_1^{B n^{1/(\beta-1)}} x^{k-\beta} \mathrm{d}x + \frac{(B n^{1/(\beta-1)})^{k+1-\beta}}{1-\beta}}{( \int_1^{B n^{1/(\beta-1)}} x^{2-\beta} \mathrm{d}x + \frac{(B n^{1/(\beta-1)})^{3-\beta}}{1-\beta})^{k/2}} \\
                                          &\sim n^{1-k/2} (2C)^{1-k/2} \, \frac{\frac{(Bn^{1/(\beta-1)})^{k+1-\beta}}{k+1-\beta} + \frac{(B n^{1/(\beta-1)})^{k+1-\beta}}{1-\beta}}{( \frac{(B n^{1/(\beta-1)})^{3-\beta}}{3-\beta} + \frac{(B n^{1/(\beta-1)} )^{3-\beta}}{1-\beta})^{k/2}} \\
                                          &\sim n^{1-k/2} (2C)^{1-k/2} \, \frac{\frac{1}{k+1-\beta} + \frac{1}{1-\beta}}{ \frac{1}{3-\beta} + \frac{1}{1-\beta}} B^{1-\beta + \frac{k}{2}(\beta-1)} n^{ \frac{1}{\beta-1} \big( k+1-\beta + \frac{k}{2}(\beta-3) \big)}.
\end{align*}
We finally obtain:
\[ \frac{M_k(P_n)}{n^{k/2-1} M_2(P_n)^{k/2}} \sim (2C)^{1-k/2} \, \frac{\frac{1}{k+1-\beta} + \frac{1}{1-\beta}}{ \frac{1}{3-\beta} + \frac{1}{1-\beta}} B^{1-\beta + \frac{k}{2}(\beta-1)}.   \]

The quantity $n^{1/( \beta-1)}$ corresponds to the largest $n$-th quantile of $P$. Therefore, our choice of law $P_n$ can be interpreted as a truncation of the largest entries in each rows of $X_n$. If one had chosen an order of truncation smaller than $n^{1/(\beta-1)}$, the $A_k$'s would have been all equal to zero which corresponds to the Marchenko-Pastur regime, meaning that the truncation is too large and leads to a non-heavy tailed behavior. On the contrary, if one had chosen an order of truncation larger than $n^{1/(\beta-1)}$, the $A_k$'s would have been all infinite, meaning that the truncation is not large enough to apply Theorem \ref{Theorem A.S. Convergence}. In this spirit, the parameter $B>0$ can be seen as a finer adjustment of the truncation. 

Theorem \ref{Theorem A.S. Convergence} ensures that there exists a probability law $\mu_{\mathscr{A}, \alpha} = \mu_{\alpha,\beta,B}$ such that the spectral measures $\mu_n$ associated to the Wishart matrices $\frac{1}{nM_2(P_n)}Y_nY_n^T$ converges weakly in probability to $\mu_{\alpha,\beta,B}$. Using equation \eqref{Moments formula in the Theorem}, we obtain an asymptotic development of the moments of $\mu_{\alpha,\beta,B}$:
\begin{theorem}\label{Theorem Heavy Tailed}
For all $k \geq 1$, as $B \rightarrow 0$:
\[  M_k(\mu_{\alpha,\beta,B}) = M_k(  \mu_{ \alpha }) + B^{ \beta-1} \frac{1}{2C( \beta )} \cdot \frac{(3-\beta)^2}{(2-\beta)(5-\beta)} M_k \left( \mu_{ \alpha }^{(1)}   \right) + o\left( B^{\beta-1} \right).   \]
\end{theorem}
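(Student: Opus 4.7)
The plan is to substitute the small-$B$ asymptotics of the sequence $\{A_k\}$ computed just above the statement into the moment formula \eqref{Moments formula in the Theorem}, and then isolate the contributions up to order $B^{\beta-1}$. From the explicit expression obtained above the theorem, for every $k \ge 2$ we have $A_k = c_k(\beta)\, B^{(\beta-1)(k-2)/2}$ for an explicit constant $c_k(\beta)$ depending only on $k$ and $\beta$. In particular $A_2 = 1$ (independent of $B$), and an elementary simplification of the ratio $\bigl[\tfrac{1}{5-\beta}+\tfrac{1}{1-\beta}\bigr]/\bigl[\tfrac{1}{3-\beta}+\tfrac{1}{1-\beta}\bigr]$ gives
\[
A_4 = \frac{1}{2C(\beta)} \cdot \frac{(3-\beta)^2}{(2-\beta)(5-\beta)}\, B^{\beta-1}.
\]
Since $A_k = O(\gamma^k)$ with $\gamma = B^{(\beta-1)/2}$ for $B < 1$, Theorem \ref{Theorem A.S. Convergence} applies and expresses $M_k(\mu_{\alpha,\beta,B})$ as a finite sum over tuples $\mathbf{b}=(b_1,\ldots,b_a)$.

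By Remark \ref{Remark on the parity}, every $b_i$ appearing in the sum is even; setting $b_i = 2 d_i$ with $d_i \ge 1$ and $\sum_i d_i = k$, the product $\prod_i A_{b_i}$ scales like $B^{(\beta-1)\sum_i(d_i-1)} = B^{(\beta-1)(k-a)}$. Consequently, only tuples with $a \ge k-1$ contribute at order $\le B^{\beta-1}$. The case $a = k$ forces all $d_i = 1$, so $\prod_i A_{b_i} = 1$ and the remaining sum is exactly $\sum_l \alpha^l |\mathcal{W}_k(k, k+1, l, (2,\ldots,2))|$, which was identified with $M_k(\mu_\alpha)$ in the proof of Theorem \ref{Theorem Asympt Dvpt Bernoulli case}. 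The case $a = k-1$ forces one $d_i = 2$ and the rest equal to $1$, so $\mathbf{b} = (4,2,\ldots,2)$; the associated combinatorial sum is $\sum_{l=1}^{k-1} \alpha^l |\mathcal{W}_k(k-1, k, l, (4,2,\ldots,2))|$, which was identified with $M_k(\mu_\alpha^{(1)})$ in the same proof.

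All other tuples satisfy $a \le k-2$ and hence contribute $O(B^{2(\beta-1)})$; for $k$ fixed, only finitely many such terms occur, so their total is $o(B^{\beta-1})$. Collecting the leading contribution $M_k(\mu_\alpha)$ from $a = k$ with the $a = k-1$ contribution $A_4 \cdot M_k(\mu_\alpha^{(1)})$ and substituting the explicit value of $A_4$ yields the claimed expansion.

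There is no real obstacle in this argument beyond careful bookkeeping of the $B$-exponents: the two combinatorial identifications that carry all the work, namely the $a=k$ sum equals $M_k(\mu_\alpha)$ and the $a=k-1$ sum equals $M_k(\mu_\alpha^{(1)})$, are already established in the proof of Theorem \ref{Theorem Asympt Dvpt Bernoulli case}, and so can be imported verbatim. The only novelty is that the power $1/c$ from the Bernoulli case is replaced here by the explicit constant $A_4$, which is precisely what gives the prefactor $\tfrac{1}{2C(\beta)}\cdot \tfrac{(3-\beta)^2}{(2-\beta)(5-\beta)}$ in the statement.
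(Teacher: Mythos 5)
Your proposal is correct and follows exactly the route the paper intends (the paper states the theorem as a direct consequence of the moment formula \eqref{Moments formula in the Theorem} and the combinatorial identifications already made in the proof of Theorem \ref{Theorem Asympt Dvpt Bernoulli case}): the $B$-exponent bookkeeping $\prod_i A_{b_i} \asymp B^{(\beta-1)(k-a)}$, the identification of the $a=k$ and $a=k-1$ sums with $M_k(\mu_\alpha)$ and $M_k(\mu_\alpha^{(1)})$, and the computation $A_4 = \frac{1}{2C(\beta)}\cdot\frac{(3-\beta)^2}{(2-\beta)(5-\beta)}B^{\beta-1}$ all check out. The only cosmetic quibble is that the geometric bound $A_k = O(\gamma^k)$ should also absorb the factor $(2C)^{1-k/2}$ in $c_k(\beta)$, which changes nothing since Theorem \ref{Theorem A.S. Convergence} only requires some $\gamma>0$ for each fixed $B$.
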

\begin{remark}
For simplicity we considered the explicit law $P(\mathrm{d}x) = \frac{C(\beta)}{1+|x|^{\beta}}$. However, using Karamata's estimates (Theorem 2, Section VIII.9 of \cite{feller1967introduction}) on truncated moments of regularly varying functions, one could have studied in a similar way the case when $P$ is in the domain of attraction of a $(\beta -1)$-stable law, for $1 < \beta <3$.
\end{remark}

\paragraph*{Acknowledgments.} The author would like to warmly thank his advisors Nathanaël Enriquez and Laurent Ménard for many helpful discussions and suggestions about this work.

\bibliographystyle{plain}

\bigskip
\noindent Nathan Noiry :\\
Laboratoire Modal'X, \\
UPL, Université Paris Nanterre,\\ 
F92000 Nanterre France

\end{document}